\makeatletter \@addtoreset{equation}{section}
\def\dis{\displaystyle}
\def\sm{\smallskip}
\def\L2{L^2(\Omega;\mathbb{R})}
\def\beq{\begin{eqnarray}}
\def\eeq{\end{eqnarray}}
\def\qed{\hfill$\Box$}
\newtheorem{theorem}{Theorem}[section]
\newtheorem{proposition}{Proposition}[section]
\newtheorem{defn}{Definition}[section]
\newtheorem{remark}{Remark}[section]
\newtheorem{lemma}{Lemma}[section]
\newtheorem{Ex}{Example}[section]
\begin{document}

\title{Nonlocal solutions of parabolic equations \\with strongly elliptic differential operators}
\author{
Irene Benedetti\\
\small\textit{Department of Mathematics and Computer Sciences,
University of Perugia}\\
\small\textit{I-06123 Italy, e-mail: irene.benedetti@dmi.unipg.it}\\
\and Luisa Malaguti\\
\small\textit{Department of Sciences and Methods for Engineering, University of Modena and Reggio Emilia}\\
\small\textit{I-42122 Italy, e-mail: luisa.malaguti@unimore.it}
\and Valentina Taddei\\
\small\textit{Department of Sciences and Methods for Engineering, University of Modena and Reggio Emilia}\\
\small\textit{I-42122 Italy, e-mail: valentina.taddei@unimore.it}}
\date{}
\maketitle
\begin{abstract} The paper deals with second order parabolic equations on bounded domains with Dirichlet conditions in arbitrary Euclidean spaces. Their interest comes from being models for describing reaction-diffusion processes in several frameworks. A linear diffusion term in divergence form is included which generates a strongly elliptic differential operator. A further  linear  part, of integral type, is present which accounts of nonlocal diffusion behaviours. The main result provides a unifying method for studying the existence and localization of solutions satisfying nonlocal associated boundary conditions. The Cauchy multipoint and the mean value conditions are included in this investigation. The problem is transformed into its abstract setting and the proofs are based on the homotopic invariance of the Leray-Schauder topological degree. A \emph{bounding function} (i.e. Lyapunov-like function) theory is developed, which is new in this infinite dimensional context. It allows that the associated vector fields have no fixed points on the boundary of their domains and then it makes possible the use of a degree argument.

\vspace{0.5 cm}
\noindent \textbf{AMS Subject Classification:} Primary 35K20. Secondary 34B10, 47H11, 93D30.

\smallskip
\noindent
\textbf{Keywords:} Parabolic equations; multipoint and mean value conditions; degree theory; Lyapunov-like functions.
\end{abstract}
\section{Introduction}\label{s:intro}
The paper deals with the second order parabolic equation
\begin{equation}\label{e:PE}
\frac{\partial u(t, \xi)}{\partial t}= \displaystyle{\sum_{i,j=1}^n\frac{\partial}{\partial \xi_i} \left( a_{i,j}(\xi)\frac{\partial u(t,\xi)}{\partial \xi_j}\right)+\int_{D}k(\xi,y)u(t,y)dy}-bu(t,\xi)+g(t,u(t,\xi))
\end{equation}
with $t\in [0,T]$  and $\xi\in D\subset \mathbb{R}^n$, where  $D$  is a bounded domain with a sufficiently regular boundary $\partial D$.  The coefficients $a_{i,j}\in C^1(\overline D) $ for $i,j=1,...,n$, are symmetric i.e.
\begin{equation}\label{e:sym}
a_{i,j}(\xi)=a_{j,i}(\xi), \, \xi\in \overline D \quad \text{for }i, j =1,...,n \text{ with } i\ne j
\end{equation}
and there is a value $C_0>0$ such that
\begin{equation}\label{e:elliptic}
 C_0\Vert \sigma \Vert^2 \le \displaystyle{\sum_{i,j=1}^n}a_{i,j}(\xi)\sigma_i\sigma_j  \text{ for all } \sigma \in \mathbb{R}^n.
\end{equation}
Moreover $b > 0$ is a prescribed constant, $k: D \times D \to \mathbb{R}$ and $g:[0,T]\times \mathbb{R} \to \mathbb{R}$ are two given maps. The solution is subject to the Dirichlet boundary conditions
\begin{equation}\label{e:D}
u(t,\xi)=0, \quad \text{for } t\in[0,T], \, \xi \in \partial D.
\end{equation}

Equation \eqref{e:PE} is a model for reaction-diffusion processes in many frameworks and hence it is widely investigated. We refer to the recent monographs \cite{DB G V}, \cite{GK}, \cite{MN} and \cite{Yagi} for a wide discussion on parabolic dynamics. The symmetric second order differential operator in its r.h.s.  accounts of diffusion behaviours of a punctual type while the nonlocal term in integral form includes long distance diffusive interactions or memory effects. When $a_{i,j}(\xi) \equiv \delta_{i,j}=\left\{\begin{array}{ll} 0 & i\ne j \\
1 & i=j \end{array}\right.$,  the differential term on the right hand side of \eqref{e:PE} simply reduces to the Laplace operator and hence \eqref{e:PE} becomes
\begin{equation*}
u_t(t,\xi)=\Delta u(t,\xi)+\int_Dk(\xi, y)u(t, y)\, dy -bu(t,\xi)+g(t, u(t,\xi)), \enspace t\in[0,T], \, \xi \in D.
\end{equation*}

We always assume that
\begin{equation}\label{e:gandk}
\begin{array}{rl}
(i)&g  \text{ is continuous and there exist } L>0 \text{ and } \beta \in (0,1) \text{ such that}\\
&\vert g(t, \xi)-g(t, y)\vert \le L\max\{\vert \xi-y\vert^{\beta}, \vert \xi-y\vert \}, \text{ for } t \in [0,T], \, \xi,y \in \mathbb{R},\\[5mm]
(ii)&k\in L^{\infty}(D \times D) \text{ and } 0\le k(\xi,y) \le 1 \text{ for a.a. }  \xi,y \in D.
\end{array}
\end{equation}
By the estimate in \eqref{e:gandk}\emph{(i)} the function $g$  has a sublinear growth in its second variable $\xi$ when $\vert \xi \vert \to \infty$, for every $t\in [0,T]$;  $g$  is also  H\"{o}lder continuous with exponent $\beta$ in $\xi$ for every $t$ and then, in particular,   $g(t,\xi)$ may approach $g(t,0)$ as $\vert \xi \vert^{\beta}$ when $\xi \to 0$.

\noindent When $k(\xi, y):=h(\xi-y)$ for a.a. $\xi, y \in D$ with $h \in L^{\infty}(D)$ and $0\le h(\xi)\le 1$ for a.a. $\xi \in D$, the integral term in \eqref{e:PE} can be written in the form $h\ast u(t,\cdot)$, i.e. is a convolution product with convolution kernel $h$.

\smallskip As usual $L^p(D)$ denotes the Lebesgue space  $L^p(D, \mathbb{R})$ and we always  restrict to the case when
\begin{equation*}
1<p<\infty.
\end{equation*}
Under conditions \eqref{e:sym} and \eqref{e:elliptic} the linear elliptic partial differential operator in divergence form $A_p \colon W^{2,p}\left(D\right)\cap W^{1,p}_0\left(D\right)\to
L^p\left(D\right)$ given by
\begin{equation}\label{e:divergence}
A_p(v)(x)=\sum_{i,j=1}^n \frac{\partial}{\partial x_i}\left( a_{ij}(x)\frac{\partial v(x)}{\partial x_j}\right)
\end{equation}
is well-defined and it is the infinitesimal generator of an analytic semigroup of contractions $\{S(t)\}_{t\ge 0}$ in $L^p(D)$ (see e.g. \cite[Theorem 3.6 p. 215]{p}); we refer to  Section \ref{s:prelim} for an additional discussion about this semigroup.

\noindent The abstract formulation of \eqref{e:PE} takes  the form
\begin{equation}
\label{e:AbEq}
\begin{array}{l}
x^{\, \prime}(t)=Ax(t)+f(t,x(t)), \quad t\in [0,T], \, x\in L^p(D)\\
\end{array}
\end{equation}
with
\begin{equation}\label{e:fromutox}
x(t):=u(t,\cdot), \quad \text{for } t\in [0,T].
\end{equation}
 As usual \eqref{e:AbEq} is obtained by equation \eqref{e:PE} when  $u$ is no longer considered as a function of the variables $t$ and $\xi$, but as a mapping $t \longmapsto u(t, \cdot)$ with $t \in [0,T]$ and $u(t, \cdot)$ in a suitable function space. For short in \eqref{e:AbEq} we simply denote with $A$ the linear operator  $A_p$ while the function $f \colon [0,T]\times L^p(D) \to L^p(D)$ includes all the additional terms in \eqref{e:PE} (see formula \eqref{e:fH} in Section \ref{s:application}).

 \noindent By a solution of \eqref{e:PE} we mean a function $u\colon [0,T]\times D \to \mathbb{R}$ with
$u(t, \cdot) \in L^p(D)$ for all $t \in [0,T]$ such that the corresponding function $x$ introduced in \eqref{e:fromutox} belongs to   $C([0,T], L^p(D))$ and it is a solution of \eqref{e:AbEq} in integral form, i.e. $x$ is a mild solution (see Definition \eqref{d:mild}) of \eqref{e:AbEq}.

\smallskip

The  existence of solutions to \eqref{e:PE} which satisfy given nonlocal conditions displays a growing interest for the possibility of these trajectories to capture additional information about the dynamics. We mention, for instance, the mean value condition.
\begin{equation}\label{e:mean}
u(0,\xi)=\frac{1}{T}\int_0^T u(t,\xi)\, dt, \quad \text{for a.a. }\xi \in D
\end{equation}
and the multipoint condition
\begin{equation}\label{e:Cauchy}
u(0,\xi)=\displaystyle{\sum_{i=1}^{q}}\alpha_i u(t_i, \xi) \text{ with }t_i \in (0,T], \, \alpha_i \in \mathbb{R}, \, i=1,...,q \quad \text{and a.a. }\xi\in D.
\end{equation}

\smallskip
The linear parabolic case with no integral term and boundary conditions as in \eqref{e:Cauchy} is in Chabrowski \cite{Cw}; the study is based on a maximum principle and the use of a Green function. The model in Deng \cite{Deng} deals with the evolution of a small quantity of gas in a tube; the nonlocal condition is of integral type (see \eqref{e:mean}) and $t$  varies on a half-line; the nonlinear term is smooth and also the asymptotic behaviour of the solution at infinity is discussed. The nonlocal condition in Jackson \cite{J} is quite general and possibly nonlinear. Pao \cite{pao} treated the existence and multiplicity of solutions between a pair of ordered upper and lower solution again in a smooth model which also  includes a nonlocal  initial condition. Infante-Maciejewski \cite{IM} and Xue \cite{xue09} studied  systems of two equations with an elliptic part  given by the Laplace operator; while the latter is based on a fixed point argument, in the former a degree argument is used and the appearance of positive solutions is proved; strong growth restrictions on the  terms are assumed in both papers. The model introduced by Zhu-Li \cite{zhuli11} is quite general, again with integral nonlocal conditions, but the growth and regularity conditions are rather strong and given in implicit form. A degree argument is used also by Benedetti-Loi-Taddei  \cite{BLT}, combined with an approximation solvability method and it seems especially useful for treating the case when the nonlinearity depends on some weighted mean value of the solution.  At last the model proposed by Viorel \cite{Viorel} has an autonomous nonlinearity of polynomial type with a superlinear growth at infinity.

\bigskip
We will prove the following result on  the existence of solutions satisfying the above boundary conditions. Notice that our model has quite general regularity conditions and no growth restrictions on its term. As usual the symbol  $\vert D\vert$ denotes the Lebesgue measure of the set $D$.
\begin{theorem}\label{t:nonPEP}
Consider equation \eqref{e:PE} with $a_{i,j}\in C^1(\overline D), \, i,j=1,...,n$ satisfying  \eqref{e:sym}, \eqref{e:elliptic}. Assume conditions in \eqref{e:gandk} and let $b>L+\vert D\vert$. Then problem \eqref{e:PE}-\eqref{e:D}

\begin{itemize}
\item[\emph{(i)}] admits a solution satisfying condition \eqref{e:mean};
\item[\emph{(ii)}] admits a solution satisfying condition \eqref{e:Cauchy} provided that
\begin{equation*}
\sum_{i=1}^q \vert \alpha_i\vert \le 1.
\end{equation*}
\end{itemize}

\end{theorem}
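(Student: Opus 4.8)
The plan is to recast problem \eqref{e:PE}--\eqref{e:D} together with the nonlocal condition as a fixed point problem in $C([0,T];L^p(D))$ and then apply the homotopic invariance of the Leray--Schauder degree, exactly as announced in the introduction. First I would make precise the function $f$ of \eqref{e:AbEq}: set $f(t,x)(\xi)=\int_D k(\xi,y)x(y)\,dy-bx(\xi)+g(t,x(\xi))$, and check, using \eqref{e:gandk}, that $f\colon[0,T]\times L^p(D)\to L^p(D)$ is well defined, continuous and sends bounded sets into bounded sets; here the bound $0\le k\le1$ gives $\|\int_D k(\cdot,y)x(y)\,dy\|_p\le|D|\,\|x\|_p$ (via H\"older on $D$), and the sublinear H\"older estimate on $g$ controls the last term. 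Then, writing $\{S(t)\}_{t\ge0}$ for the analytic semigroup of contractions generated by $A=A_p$, a mild solution is a fixed point of $x\mapsto S(t)c+\int_0^t S(t-s)f(s,x(s))\,ds$, where the ``initial value'' $c\in L^p(D)$ is itself determined by the nonlocal condition: for the mean value condition $c=\frac1T\int_0^T x(t)\,dt$ and for the multipoint condition $c=\sum_i\alpha_i x(t_i)$. Substituting $c$ back yields in each case a single integral operator $\mathcal{T}\colon C([0,T];L^p(D))\to C([0,T];L^p(D))$ whose fixed points are precisely the desired nonlocal solutions.

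The next block of steps is to verify that $\mathcal{T}$ is completely continuous. Continuity follows from continuity of $f$ and of $c\mapsto S(\cdot)c$ together with dominated convergence; compactness is the standard argument for semigroup convolution operators — one uses that $S(t)$ is compact for $t>0$ (which follows from analyticity plus compact resolvent of $A_p$ on the bounded domain $D$, as discussed in Section~\ref{s:prelim}), splits $\int_0^t=\int_0^{t-\delta}+\int_{t-\delta}^t$ to get equicontinuity in $t$ and relative compactness of the section in $L^p(D)$, and invokes Ascoli--Arzel\`a. The nonlocal-condition operator $x\mapsto c$ is linear and bounded from $C([0,T];L^p(D))$ to $L^p(D)$ with norm $\le 1$ in the multipoint case (because $\sum|\alpha_i|\le1$) and $=1$ in the mean value case, so composing with the compact map $S(\cdot)$ preserves complete continuity.

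The heart of the proof, and the genuinely new ingredient, is the a priori bound: I would show that there is $R>0$ such that every solution $x$ of the homotopy $x=\lambda\,\mathcal{T}x$, $\lambda\in(0,1]$, satisfies $\|x\|_{C([0,T];L^p(D))}<R$, so that $\deg(I-\mathcal{T},B_R,0)=\deg(I,B_R,0)=1$ and a fixed point exists. This is where the \emph{bounding function} (Lyapunov-like) machinery developed earlier in the paper enters: one tests the equation against the natural functional — for $p=2$ this is $\xi\mapsto\|x(t)\|_2^2$, and for general $p$ the appropriate $V(v)=\|v\|_p^p$ — using the strong ellipticity \eqref{e:elliptic}, which makes $\langle Ax,\,|x|^{p-2}x\rangle\le 0$ (the dissipativity behind the contraction semigroup and the Dirichlet condition kill the elliptic term), while the reaction part contributes $\int_D\big(\int_D k(\xi,y)x(y)\,dy\big)|x|^{p-2}x\,d\xi-b\|x\|_p^p+\int_D g(t,x)|x|^{p-2}x\,d\xi\le (|D|+L-b)\|x\|_p^p+\text{const}$, using $0\le k\le 1$ and the sublinear bound $|g(t,\xi)|\le |g(t,0)|+L\max\{|\xi|^\beta,|\xi|\}$. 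The assumption $b>L+|D|$ is exactly what is needed to make the coefficient $|D|+L-b$ strictly negative, giving a differential inequality $\frac{d}{dt}V(x(t))\le -\gamma V(x(t))+M$ with $\gamma>0$; together with the contraction estimate on $S(t)$ and the norm-$\le1$ bound on the nonlocal operator, this forces a uniform bound on $\max_t\|x(t)\|_p$ independent of $\lambda$ — in the multipoint case $\|x(0)\|\le\sum|\alpha_i|\,\|x\|_\infty\le\|x\|_\infty$ combined with the decay prevents blow-up, and in the mean value case one argues similarly with the averaged value. I expect this last estimate — making the formal Lyapunov computation rigorous at the level of mild solutions in $L^p$ for $p\neq2$, i.e. justifying the chain rule for $V(x(t))=\|x(t)\|_p^p$ along a merely mild solution and handling the boundary term from $A_p$ in divergence form — to be the main technical obstacle, and I would deal with it via the bounding-function theory already set up in the paper (approximating by strong solutions, or using the subdifferential/duality-map formulation of dissipativity), after which parts (i) and (ii) follow by specializing the nonlocal operator and checking $\sum|\alpha_i|\le1$ in case (ii).
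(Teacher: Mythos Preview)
Your overall strategy is close to the paper's, but there is a genuine gap in the compactness step. You claim that $\mathcal{T}$ is completely continuous because the semigroup is compact for $t>0$. The convolution term $x\mapsto \int_0^{\cdot}S(\cdot-s)f(s,x(s))\,ds$ is indeed compact, but the linear part $x\mapsto S(\cdot)M(x)$ is \emph{not}: at $t=0$ it equals $M(x)$, and neither the mean-value map nor the multipoint map is compact from $C([0,T];L^p(D))$ to $L^p(D)$ (both are merely bounded with norm $\le 1$). So the Leray--Schauder degree for $I-\mathcal{T}$ is not defined as you wrote it. This is exactly the obstruction the paper's proof is built around: Theorem~\ref{t:main} replaces the problem on $[0,T]$ by approximate problems on $[1/m,T]$ with initial value $\lambda S(1/m)M(q)$, so the factor $S(1/m)$ (genuinely compact) absorbs the non-compact nonlocal map; one then passes to the limit $m\to\infty$ via a Cantor diagonal argument together with condition~\emph{(m$_1$)} and reflexivity of $L^p$.

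Your a~priori bound is also organised differently from the paper, and the version you sketch is harder to justify than the one actually used. You aim for a Gronwall-type inequality $\tfrac{d}{dt}\|x(t)\|_p^p\le -\gamma\|x(t)\|_p^p+M$ along mild solutions; as you note yourself, making this rigorous for mild (not strong) solutions in $L^p$, $p\neq 2$, is delicate. The paper avoids this entirely: it does not differentiate $V(x(t))$. Instead it takes $V_r(x)=\tfrac12(\|x\|^2-r^2)$ and shows the \emph{pointwise} transversality condition $\langle J(\eta),f(t,\eta)\rangle<0$ for $\|\eta\|=r$ (this is where $b>L+|D|$ and the choice of $r$ enter), and Theorem~\ref{t:bf} then proves, by a direct argument on the mild-solution integral formula and property~\eqref{e:S(t)K} of the contraction semigroup, that no mild solution with values in $r\overline B$ can touch $\partial(rB)$ on $(0,T]$. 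Combined with $\|M(x)\|\le r$ (your norm-$\le 1$ observation, i.e.\ condition~\emph{(m$_0$)}), this gives the fixed-point-free boundary needed for the degree homotopy, with no differential inequality and no approximation by strong solutions.
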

The paper contains a wider discussion which involves the quite general nonlocal condition
\begin{equation}\label{e:nonPE}
x(0)=M(x)
\end{equation}
where $M\colon C([0,T], L^p(D)) \to L^p(D) $ and $x$ is the function defined in \eqref{e:fromutox}. It is clear that \eqref{e:mean}, \eqref{e:Cauchy} and the Cauchy condition $x(0)=x_0=u(0, \cdot)$ satisfy \eqref{e:nonPE}.  Notice moreover that
\begin{itemize}
\item[(i)] the periodic condition: $M(x)=x(T)=u(T, \cdot); $
\item[(ii)] the antiperiodic condition: $ M(x)=-x(T)=u(T, \cdot) $
\end{itemize}
are special cases of \eqref{e:Cauchy}. We remark that \eqref{e:nonPE} also includes nonlinear conditions such as
\begin{equation*}
u(0,\xi)=G\left( \int_0^T h(t)u(t,\xi)\, dt\right), \quad \xi \in D
\end{equation*}
with suitable $h \colon [0,T] \to \mathbb{R}$ and $G \colon \mathbb{R} \to \mathbb{R}$ introduced for instance in \cite{BTV} (see Example 5) in the framework of age-population models.

 \smallskip

 The proof of Theorem \ref{t:nonPEP} is in Section \ref{s:application} where there is a quite general discussion about problem\eqref{e:PE}-\eqref{e:D}-\eqref{e:nonPE} (see Theorem \ref{t:PEgen}). The results are based on a unifying approach of topological type on the abstract setting, i.e. for equation \eqref{e:equation} (see Section \ref{s:abstrat});  a degree argument, in particular, is used there which makes then possible to avoid strong restrictions on the terms of \eqref{e:PE} as already noted about Theorem \ref{t:nonPEP}. On the other hand, the involved  vector fields need to be  fixed-points free on their boundary; the  property is obtained by a bounding function (i.e. Lyapunov-like) method which is original in this infinite dimensional setting; the method is discussed in Section \ref{s:bounding}. Section \ref{s:prelim} contains some notation and preliminary results.
\smallskip

\smallskip

Several Banach spaces appear in this paper; we simply use the symbol $\Vert \cdot \Vert$ to denote the norm in all of them when it is clear from the context  which is, each time, the involved space. The symbol $ E^* $ stands for the dual space of $ E. $

\section{Preliminary results and notation}\label{s:prelim}
\noindent Let  $E$ be a Banach space. A family of linear, bounded operators $S(t):E\to E$, for $t $ in the interval $[0, \infty)$, is called a \emph{$C_0$-semigroup} if the following conditions are satisfied:
\begin{enumerate}[(a)]
\item $S(0)=I$;
\item $S(t+r)=S(t)S(r)=S(r)S(t)$ for  $t,r \in [0, \infty)$;
\item the function $t\to S(t)x$ is continuous on $[0, \infty)$, for every $x\in E$.
\end{enumerate}
The \emph{infinitesimal generator } of $S(t)$ is the linear operator $A$ defined by
\begin{equation*}
Ax=\lim_{h\to 0^+}\frac {\left ( S(h)-I\right)x}{h}, \, x\in D(A)
\end{equation*}
with
\begin{equation*}
D(A):=\left\{x\in E \, : \, \lim_{h\to 0^+}\frac {\left ( S(h)-I\right)x }{h} \text{ exists }\right\}.
\end{equation*}
We refer to \cite{Lun}, \cite{p}, \cite{vr2} for the theory of semigroups. Here we only restrict to  those properties which are needed in our investigation.

\noindent As a straightforward consequence of \emph{(c)} (see e.g. \cite[Theorem 2.4 p.4]{p}), we obtain
\begin{equation}\label{e:media}
\lim_{h\to 0}\frac{1}{h}\int_t^{t+h}S(s)x \, ds=S(t)x, \enspace t\ge 0, \, x \in  E.
\end{equation}

\smallskip
\noindent Every $C_0$-semigroup is bounded, for $t$ in a bounded interval, (see e.g. \cite[Theorem 2.2 p.4]{p}), in the space $\mathcal{L}(E)$ of linear, bounded operators.  When further $\Vert S(t) \Vert \leq 1 $ for  $ t \geq 0, \{S(t)\}_{t\ge 0}$ is said to be a \emph{contraction semigroup}. It is easy to see that every contraction semigroup satisfies
\begin{equation}\label{e:contraction}
S(t)r\overline B \subseteq r\overline B, \qquad \text{for  } r>0, \,  t\ge0,
\end{equation}
where $B$ is the open unit ball in $E$ centered at $0$. The  semigroup $\{S(t)\}_{t\ge 0}$ is said:
\begin{enumerate}[]
\item \emph{compact} if $S(t)$ is compact, for $t>0$;
\item \emph{uniformly differentiable} if the map $t \longmapsto S(t)$ defined on $[0, \infty)$ with values in $\mathcal{L}(E)$ is differentiable for every $t>0$ (see e.g. \cite[Definition 6.3.2]{vr2}).
\end{enumerate}
For $0<\theta\le \pi$ define the sector
\begin{equation*}
C_{\theta}:=\{z\in \mathbb{C} \, : \, -\theta <\text{arg } z <\theta\}.
\end{equation*}
Clearly
\begin{equation*}
\overline{C}_{\theta}=\{z\in \mathbb{C} \, : \, -\theta \le \text{arg } z \le \theta\}\cup \{0\}.
\end{equation*}
\begin{defn}\label{d:analytic} (\cite[Definition 7.1.1]{vr2}) The  $C_0$-semigroup $\{S(t)\}_{t\ge 0}$ is said to be \emph{analytic}  if there is $\theta \in (0, \pi]$  and a mapping $\tilde S \colon \overline{C}_{\theta}\to \mathcal{L}(E) $ such that
\begin{enumerate}[(i)]
\item $S(t)=\tilde S(t), \, t\ge 0; $
\item $\tilde S(z+w)=\tilde S(z)\tilde S(w)$ for  $z, w \in \overline{C}_{\theta}; $
\item $\displaystyle{\lim_{z\in \overline{C}_{\theta}, \,  z\to 0}}\tilde S(z)x=x$ for $x \in E; $
\item the mapping $z \longmapsto \tilde S(z)$ is analytic from $\overline{C}_{\theta}$ to $\mathcal{L}(E). $
\end{enumerate}
\end{defn}
\begin{lemma}\label{l:graphnorm}(\cite[Corollary 3.5.1]{vr2} \ Let $A\colon D(A) \subseteq E \to E$ be the infinitesimal generator of a C$_0$-semigroup. The map $\Vert \cdot \Vert_{D(A)} \colon D(A) \to \mathbb{R}$ given by
\begin{equation}
\Vert x\Vert_{D(A)} :=\Vert x \Vert+\Vert Ax \Vert
\end{equation}
defines a norm in $D(A)$, called graph norm, with respect to which $D(A)$ is a Banach space.
\end{lemma}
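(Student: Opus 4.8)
The plan is to verify first that $\Vert\cdot\Vert_{D(A)}$ satisfies the norm axioms and then to establish completeness. The axioms are immediate from the fact that $\Vert\cdot\Vert$ is a norm on $E$ together with the linearity of $A$: positive homogeneity and the triangle inequality follow by applying the corresponding properties of $\Vert\cdot\Vert$ to both summands $\Vert x\Vert$ and $\Vert Ax\Vert$ and using $A(\lambda x)=\lambda Ax$ and $A(x+y)=Ax+Ay$; moreover $\Vert x\Vert_{D(A)}=0$ forces $\Vert x\Vert=0$, hence $x=0$. Thus the only substantial point is completeness.

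So I would let $(x_m)$ be a Cauchy sequence in $(D(A),\Vert\cdot\Vert_{D(A)})$. Since $\Vert x_m-x_k\Vert\le\Vert x_m-x_k\Vert_{D(A)}$ and $\Vert Ax_m-Ax_k\Vert=\Vert A(x_m-x_k)\Vert\le\Vert x_m-x_k\Vert_{D(A)}$, both $(x_m)$ and $(Ax_m)$ are Cauchy in the Banach space $E$; write $x_m\to x$ and $Ax_m\to y$ in $E$. To finish, it suffices to show that $x\in D(A)$ and $Ax=y$, since then $\Vert x_m-x\Vert_{D(A)}=\Vert x_m-x\Vert+\Vert Ax_m-y\Vert\to 0$. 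This last assertion is precisely the statement that $A$ is a closed operator, which I would either quote from the semigroup theory (see e.g. \cite{p}, \cite{vr2}) or re-derive as indicated next.

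For $x_m\in D(A)$ one has the standard identity $S(h)x_m-x_m=\int_0^h S(s)Ax_m\,ds$ for every $h>0$, obtained by integrating the relation $\frac{d}{ds}S(s)x_m=S(s)Ax_m$ on $[0,h]$. Letting $m\to\infty$, the left-hand side converges to $S(h)x-x$ because $S(h)\in\mathcal{L}(E)$, while the right-hand side converges to $\int_0^h S(s)y\,ds$ because $\Vert S(s)(Ax_m-y)\Vert\le M\Vert Ax_m-y\Vert$ uniformly for $s\in[0,h]$, with $M=\sup_{s\in[0,h]}\Vert S(s)\Vert<\infty$ by the local boundedness of the semigroup. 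Hence $S(h)x-x=\int_0^h S(s)y\,ds$; dividing by $h$ and letting $h\to 0^+$, formula \eqref{e:media} yields $\lim_{h\to 0^+}\frac{S(h)x-x}{h}=S(0)y=y$, that is, $x\in D(A)$ and $Ax=y$, as required.

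The norm-axiom verification is entirely routine, and the only place where a genuine property of $A$ (as opposed to an arbitrary linear map) is used is the completeness step; there the crux is the closedness of the generator, for which the integral identity above combined with the averaging property \eqref{e:media} is the essential tool.
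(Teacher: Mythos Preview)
Your argument is correct and is the standard textbook proof: the norm axioms are immediate, and completeness reduces to the closedness of $A$, which you either cite or re-derive via the identity $S(h)x_m-x_m=\int_0^h S(s)Ax_m\,ds$ together with \eqref{e:media}. Note that the paper itself does not give a proof of this lemma at all---it is stated as a quotation from \cite[Corollary 3.5.1]{vr2}---so there is no in-paper proof to compare against; your write-up supplies precisely the argument one would find in the cited source.
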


\noindent We make use, in the sequel, of the following compactness condition which involves the graph norm.

\begin{proposition}\cite[Corollary 6.3.2]{vr2}\label{p:Vrabie} \ Let  $A \colon D(A) \subseteq E \to E$ be the infinitesimal generator of a uniformly differentiable C$_0$-semigroup of contractions. If $D(A)$, endowed with the graph norm, is compactly embedded in $E$, then the semigroup generated by $A$ is compact.
\end{proposition}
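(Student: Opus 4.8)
The plan is to factor each operator $S(t)$, $t>0$, through the Banach space $\bigl(D(A),\Vert\cdot\Vert_{D(A)}\bigr)$ and then exploit the compactness of the embedding. The first and crucial step is to observe that uniform differentiability forces $S(t)E\subseteq D(A)$ for every $t>0$, together with the boundedness of the operator $AS(t)$. Indeed, fix $t>0$ and let $S'(t)\in\mathcal{L}(E)$ denote the derivative of $r\mapsto S(r)$ at $r=t$ in the operator norm. For $h>0$ the semigroup property gives the operator identity $\frac1h\bigl(S(h)-I\bigr)S(t)=\frac1h\bigl(S(t+h)-S(t)\bigr)$, whose right-hand side converges to $S'(t)$ in $\mathcal{L}(E)$ as $h\to0^+$. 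Applying this to an arbitrary $x\in E$ yields $\lim_{h\to0^+}\frac1h\bigl(S(h)-I\bigr)(S(t)x)=S'(t)x$ in $E$; by the very definition of $D(A)$ this means $S(t)x\in D(A)$ and $AS(t)x=S'(t)x$. In particular $AS(t)=S'(t)$ is a bounded linear operator on $E$.

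Second, I would record the estimate that makes $S(t)$ a bounded map into the graph-norm space. For every $x\in E$,
\[
\Vert S(t)x\Vert_{D(A)}=\Vert S(t)x\Vert+\Vert AS(t)x\Vert\le\bigl(\Vert S(t)\Vert_{\mathcal{L}(E)}+\Vert S'(t)\Vert_{\mathcal{L}(E)}\bigr)\Vert x\Vert ,
\]
so the linear operator $T_t\colon E\to\bigl(D(A),\Vert\cdot\Vert_{D(A)}\bigr)$, $T_tx:=S(t)x$, is well defined and bounded, the target being a Banach space by Lemma~\ref{l:graphnorm}.

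Finally, I would conclude by composition. Let $J\colon\bigl(D(A),\Vert\cdot\Vert_{D(A)}\bigr)\to E$ be the inclusion, which is a compact operator by hypothesis. Then for each $t>0$ we have the factorization $S(t)=J\circ T_t$, a composition of the bounded operator $T_t$ with the compact operator $J$; hence $S(t)$ is compact. Since $t>0$ was arbitrary, the semigroup $\{S(t)\}_{t\ge0}$ is compact in the sense defined above.

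I expect the only genuinely delicate point to be the first step, namely deducing from differentiability in the operator norm that $S(t)$ maps all of $E$ into $D(A)$ and that $AS(t)$ is everywhere defined and bounded; once that structural fact is in place, the rest is a routine factorization. I also note that the contraction hypothesis is not actually needed for this implication: it belongs to the ambient setting of the cited corollary and could be dropped here.
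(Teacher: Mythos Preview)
Your argument is correct. The factorization $S(t)=J\circ T_t$ through $\bigl(D(A),\Vert\cdot\Vert_{D(A)}\bigr)$ is exactly the standard route, and the key structural step---that uniform differentiability forces $S(t)E\subseteq D(A)$ with $AS(t)=S'(t)\in\mathcal L(E)$---is argued cleanly via the identity $\frac1h(S(h)-I)S(t)=\frac1h(S(t+h)-S(t))$. Your closing remark that the contraction assumption plays no role in the implication is also correct.

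There is nothing to compare against, however: the paper does not supply a proof of this proposition but simply quotes it as \cite[Corollary 6.3.2]{vr2}. It is used as a black box in the proof of Theorem~\ref{t:compact}. So your write-up is not an alternative to the paper's argument---it \emph{is} an argument where the paper offers none---and it matches the proof one finds in Vrabie's monograph.
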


\smallskip
 We complete this brief discussion about semigroup theory with an important result about the semigroup generated by the  elliptic operator introduced in \eqref{e:divergence}.
\begin{theorem}\label{t:compact} \ Consider the  linear operator $A_p$ defined in \eqref{e:divergence}.  When conditions  \eqref{e:sym}, \eqref{e:elliptic} are satisfied, the semigroup generated by $A_p$ is compact.
\end{theorem}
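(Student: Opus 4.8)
The plan is to deduce the statement from Proposition \ref{p:Vrabie}. Recall that, under \eqref{e:sym} and \eqref{e:elliptic}, the operator $A_p$ is the infinitesimal generator of an analytic semigroup of contractions $\{S(t)\}_{t\ge 0}$ on $L^p(D)$ (the reference \cite[Theorem 3.6 p. 215]{p} quoted in the Introduction). First I would observe that an analytic $C_0$-semigroup is in particular uniformly differentiable: by Definition \ref{d:analytic}\emph{(iv)} the extension $z\longmapsto \tilde S(z)$ is analytic on the sector $\overline C_\theta$, hence its restriction $t\longmapsto S(t)$ to $(0,\infty)$ is differentiable in the norm of $\mathcal L(L^p(D))$. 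Thus $A_p$ generates a uniformly differentiable $C_0$-semigroup of contractions, and the only hypothesis of Proposition \ref{p:Vrabie} that remains to be checked is that $D(A_p)=W^{2,p}(D)\cap W^{1,p}_0(D)$, endowed with the graph norm $\Vert\cdot\Vert_{D(A_p)}$ of Lemma \ref{l:graphnorm}, is compactly embedded in $L^p(D)$.

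For this I would prove that on $D(A_p)$ the graph norm is equivalent to the norm induced by $W^{2,p}(D)$. One inequality is elementary: since $a_{i,j}\in C^1(\overline D)$, the differential expression in \eqref{e:divergence} maps $W^{2,p}(D)$ boundedly into $L^p(D)$, so $\Vert v\Vert_{D(A_p)}=\Vert v\Vert_{L^p}+\Vert A_p v\Vert_{L^p}\le c\,\Vert v\Vert_{W^{2,p}}$ for every $v\in D(A_p)$. The reverse inequality is the classical $L^p$ a priori estimate for the strongly elliptic operator $A_p$ under Dirichlet conditions (of Agmon--Douglis--Nirenberg / Calder\'on--Zygmund type), which, thanks to \eqref{e:elliptic}, the $C^1$-regularity of the coefficients and the regularity of $\partial D$, gives a constant $C>0$ with $\Vert v\Vert_{W^{2,p}}\le C\big(\Vert A_p v\Vert_{L^p}+\Vert v\Vert_{L^p}\big)$ for all $v\in W^{2,p}(D)\cap W^{1,p}_0(D)$. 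Hence the identity map $\big(D(A_p),\Vert\cdot\Vert_{D(A_p)}\big)\to\big(W^{2,p}(D),\Vert\cdot\Vert_{W^{2,p}}\big)$ is a topological isomorphism onto its image.

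Finally, since $D$ is bounded with sufficiently regular boundary, the Rellich--Kondrachov theorem yields that $W^{2,p}(D)$ (indeed already $W^{1,p}(D)$) is compactly embedded in $L^p(D)$; composing with the isomorphism above shows that $D(A_p)$ with the graph norm embeds compactly into $L^p(D)$. Proposition \ref{p:Vrabie} then applies and gives that the semigroup generated by $A_p$ is compact. I expect the main obstacle to be precisely the elliptic a priori estimate, i.e.\ the equivalence of the graph norm with the $W^{2,p}$-norm: this is where strong ellipticity, the $C^1$ regularity of the coefficients and the boundary regularity are genuinely used, whereas all the other steps are soft. Since this estimate is standard, I would simply cite it and proceed.
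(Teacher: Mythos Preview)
Your proposal is correct and follows essentially the same route as the paper's proof: analyticity $\Rightarrow$ uniform differentiability, the elliptic $L^p$ a priori estimate to control the $W^{2,p}$-norm by the graph norm, Rellich--Kondrachov for the compact embedding into $L^p$, and then Proposition \ref{p:Vrabie}. The only cosmetic difference is that the paper uses (and cites from \cite{p}) just the one-sided inequality $\Vert v\Vert_{W^{2,p}}\le C\Vert v\Vert_{D(A_p)}$, i.e.\ continuous embedding rather than full equivalence of norms, which is all that is needed.
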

\begin{proof}
 The linear operator $A_p$ is the infinitesimal generator of an analytic semigroup of contractions on $L^p(D)$ (see e.g. \cite[Theorem 3.6 p.215 ]{p}). In particular, the semigroup is uniformly  differentiable. $A_p$ is also strongly elliptic of order $2$ by conditions \eqref{e:sym}, \eqref{e:elliptic}. Therefore, the following estimate holds  (see e.g \cite[Theorem 3.1 p. 212]{p})
\begin{equation*}
\Vert v\Vert_{W^{2,p}(D)}\le C\left( \Vert Av\Vert_{L^p(D)}+\Vert v\Vert_{L^p(D)}\right)=Cv\Vert_{D(A_p)}, \quad v \in D(A_p),
\end{equation*}
for some $C>0$. Hence, the Banach space $\left(D(A_p), \Vert \cdot \Vert_{D(A_p)}\right)$ is continuously embedded into the space $\left(D(A_p), \Vert \cdot \Vert_{W^{2,p}(D)}\right)$. By Sobolev-Rellich-Kondrachov Theroem (see e.g. \cite[Theorem 1.5.4]{vr2}) the embedding of $\left(D(A_p), \Vert \cdot \Vert_{W^{2,p}(D)}\right)$ into $\left(D(A_p), \Vert \cdot \Vert_{L^p(D)}\right)$ is compact. We obtained  that the embedding of $\left(D(A_p), \Vert \cdot \Vert_{D(A_p)}\right)$ into $\left(D(A_p), \Vert \cdot \Vert_{L^p(D)}\right)$ is compact. We complete the proof by means of Proposition \ref{p:Vrabie}.
\end{proof}

\smallskip
\begin{defn}\label{d:semi}(\cite[Definition 4.2.1]{KOZ}) \ A sequence $\{f_n\}\subset L^1([a,b], E)$ is said to be \emph{semicompact} if it is integrably bounded, i.e. $ \Vert f_n(t) \Vert \le \nu(t) $ for a.a. $ t \in [a,b] $ and all $n \in \mathbb{N} $ with $ \nu\in L^1([a,b], E)$,  and the set $\{f_n(t)\}$ is relatively compact for a.a. $t \in [a,b]$.
\end{defn}

We recall now a useful compactness result in the space of continuous function, involving semicompact sequences.

\begin{theorem}\label{t:semicomp} (\cite[Theorem 5.1.1.]{KOZ}) \ Let $G\colon L^1([a,b], E) \to C([a,b], E) $ be an operator satisfying the following conditions
\begin{itemize}
\item[{(i)}] \ there exists $\sigma \ge 0$ such that
\begin{equation*}
\Vert Gf - Gg \Vert \le \sigma \Vert f-g \Vert
\end{equation*}
\item[{(ii)}] \ for any compact $K \subset E$ and sequence $\{f_n\}\subset L^1([a,b], E)$ such that $\{f_n(t)\} \subset K$ for a.a. $t \in [a,b]$, the weak convergence $f_n \rightharpoonup f_0$ in $L^1([a,b], E)$ implies that $Gf_n \to Gf_0$.
\end{itemize}
Then, for every semicompact sequence $\{f_n\}\subset L^1([a,b], E)$ the sequence $\{Gf_n\}$ is relatively compact in $C([a,b], E) $ and, moreover, if $f_n \rightharpoonup f_0$ then $Gf_n \to Gf_0$.
\end{theorem}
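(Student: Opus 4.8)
The plan is to reduce the statement, via a truncation argument, to the situation that hypothesis (ii) covers directly — sequences whose values lie in one fixed compact subset of $E$ — using the fact that a semicompact sequence is relatively weakly compact in $L^1([a,b],E)$.

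First I would establish (or invoke) that a semicompact $\{f_n\}$ is relatively weakly compact in $L^1([a,b],E)$: integrable boundedness $\|f_n(t)\|\le\nu(t)$, $\nu\in L^1$, yields boundedness and uniform integrability (equi-absolute continuity of the integrals), while relative compactness of $\{f_n(t)\}$ in $E$ for a.a.\ $t$ supplies the pointwise weak-compactness requirement; the Dunford--Pettis criterion for vector-valued $L^1$ then applies. Consequently every subsequence of $\{f_n\}$ has a further subsequence converging weakly in $L^1$ to some $f_0$.

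The heart of the argument is the claim: if $\{f_{n_k}\}$ is semicompact and $f_{n_k}\rightharpoonup f_0$ in $L^1$, then $Gf_{n_k}\to Gf_0$ in $C([a,b],E)$. Here I would introduce the multifunction $t\mapsto K(t):=\overline{\{f_n(t):n\ge1\}}$, which is measurable with a.e.\ compact values (the $f_n$ furnish a Castaing representation). By a Luzin/Scorza--Dragoni-type theorem for compact-valued multifunctions, for each $\delta>0$ there is a compact $J_\delta\subseteq[a,b]$ with $|[a,b]\setminus J_\delta|<\delta$ on which $t\mapsto K(t)$ is Hausdorff-continuous, so that $K_\delta:=\bigcup_{t\in J_\delta}K(t)$ is compact. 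Setting $g^\delta_n:=\chi_{J_\delta}f_n$, the values $g^\delta_n(t)$ lie in the fixed compact set $K_\delta\cup\{0\}$; since multiplication by $\chi_{J_\delta}$ is a bounded linear (hence weakly continuous) operator on $L^1$, one has $g^\delta_{n_k}\rightharpoonup g^\delta_0:=\chi_{J_\delta}f_0$, and (ii) gives $Gg^\delta_{n_k}\to Gg^\delta_0$. Then the Lipschitz bound (i) closes the gap:
\[
\|Gf_{n_k}-Gf_0\|\le\|Gf_{n_k}-Gg^\delta_{n_k}\|+\|Gg^\delta_{n_k}-Gg^\delta_0\|+\|Gg^\delta_0-Gf_0\|,
\]
where the first and third terms are bounded by $\sigma\int_{[a,b]\setminus J_\delta}\nu(t)\,dt$ and $\sigma\int_{[a,b]\setminus J_\delta}\|f_0(t)\|\,dt$ respectively — made $<\varepsilon$ uniformly in $k$ by choosing $\delta$ small, thanks to absolute continuity of the Lebesgue integral — while the middle term tends to $0$. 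Letting $k\to\infty$ and then $\varepsilon\to0$ yields the claim.

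Finally, relative compactness of $\{Gf_n\}$ in $C([a,b],E)$ follows by the usual subsequence principle: any subsequence of $\{f_n\}$ is still semicompact, hence has an $L^1$-weakly convergent sub-subsequence, on which $G$ converges by the previous paragraph; so every subsequence of $\{Gf_n\}$ has a $C([a,b],E)$-convergent sub-subsequence. For the ``moreover'' part, when $f_n\rightharpoonup f_0$ for the whole sequence every such sub-subsequential limit must equal $Gf_0$, which forces $Gf_n\to Gf_0$. The hard part will be making the truncation rigorous — extracting, off an arbitrarily small set, a single compact set containing all the $f_n(t)$ — which is exactly where semicompactness is used in full, through the measurability and Luzin regularity of $t\mapsto\overline{\{f_n(t)\}}$; the Dunford--Pettis weak-compactness input is the other substantive ingredient.
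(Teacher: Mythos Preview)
The paper does not prove this theorem; it is quoted verbatim from \cite[Theorem~5.1.1]{KOZ} and used as a black box, so there is no in-paper argument to compare against. Your sketch is essentially the standard proof one finds in that reference: relative weak compactness of a semicompact sequence in $L^1([a,b],E)$, a Luzin/Scorza--Dragoni reduction of the compact-valued multifunction $t\mapsto\overline{\{f_n(t)\}}$ to obtain a single compact $K_\delta\subset E$ off a small set, application of hypothesis~(ii) to the truncations, and the Lipschitz bound~(i) to control the remainders. The logic is sound.

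Two small points worth tightening when you write it out in full. First, your invocation of ``the Dunford--Pettis criterion for vector-valued $L^1$'' is a little glib: the pointwise relative compactness of $\{f_n(t)\}$ does not by itself meet the uniform tightness condition in the Diestel--Ruess--Schachermayer characterization; it is precisely your truncation step (producing one compact $K_\delta$ containing all $f_n(t)$ for $t\in J_\delta$, with $|[a,b]\setminus J_\delta|$ small) that verifies that criterion. So the Luzin argument logically precedes, or is simultaneous with, the weak-compactness claim rather than following it. Second, for the bound on the third term $\|Gg^\delta_0-Gf_0\|\le\sigma\int_{[a,b]\setminus J_\delta}\|f_0\|$ you implicitly need $f_0\in L^1$ with an integrable majorant; this follows from Mazur's lemma (convex combinations of the $f_{n_k}$ converge strongly, hence a.e.\ along a subsequence, and inherit the bound $\nu$), but it deserves a sentence. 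With those clarifications your argument is complete and matches the source.
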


\begin{Ex} \label{ex:CO} \ Let $\{S(t)\}_{t\ge 0} $ be a (not necessarily compact) $C_0$-semigroup. Then, the associated Cauchy operator $G \colon L^1([a,b], E) \to C([a,b], E)$ defined by
$$
Gf(t)=\int_a^t S(t-s)f(s) \, ds
$$
satisfies conditions (i) and (ii) in Theorem \ref{t:semicomp} (see e.g. \cite[Lemma 4.2.1]{KOZ}).
\end{Ex}

\bigskip
A function $f \colon X\to Y$ between the Banach spaces $X$ and $Y$ is said to be completely continuous if it is continuous and maps bounded subsets $U\subset X$ into relatively compact subsets of $Y$. Given a nonempty, open and bounded set $U\subset X$ and a completely continuous map $g\colon\overline{U}\to X$ satisfying $x\neq g(x)$ for all $x\in\partial U$, then for the corresponding vector field $i-g$ (where $i$ denotes the identity map on $ X $) the Leray-Schauder topological degree $deg (i-g,\overline{U})$ is well-defined (see, e.g. \cite{KrZa, LS}) and it satisfies the usual properties.

\section{Nonlocal solutions in Banach spaces}\label{s:abstrat}
In this part we deal with the equation \eqref{e:AbEq}. Indeed, in order to lead a discussion as general as possible, we let $t$ varying in an arbitrary interval $[a,b]$ and $x$ in a reflexive Banach space $E$ i.e. we consider
\begin{equation}\label{e:equation}
x^{\, \prime}(t)=Ax(t)+f(t,x(t)), \quad t\in [a,b], \, x\in E.
\end{equation}
We assume that
\begin{itemize}
\item[\emph{(A)}] \vspace{.2cm} $ A $ is a
linear, not necessarily bounded, operator with $ A: D(A) \subset E \rightarrow E $ and it generates a compact $C_0$-semigroup of contractions $S: [0,\infty) \to \mathcal{L}(E)$ (see Section \ref{s:prelim} for details);
\item[\emph{(f)}] \vspace{.2cm} the function $f\colon [a,b]\times E \to E$ is  continuous and for every $\Omega \subset E$ bounded there is $\nu_{\Omega} \in L^1([a,b])$ such that $\Vert f(t,x) \Vert \le \nu_{\Omega}(t)$ for a.a. $t \in [a,b]$ and  $x \in \Omega$.
\end{itemize}

\smallskip

We consider mild solutions of \eqref{e:equation}, that is functions $x \in C([a,b],E)$ which satisfy \eqref{e:equation} in integral form; more precisely
\begin{defn}\label{d:mild} A function $x\in C([a,b], E)$ is said to be a mild solution of the equation \eqref{e:equation} if
\begin{equation}
x(t)=S(t-a)x(a)+\int_a^t S(t-s)f(s, x(s))\, ds, \quad  t \in [a,b].
\end{equation}
\end{defn}

We combine equation \eqref{e:equation} with a nonlocal condition
\begin{equation}
\label{e:nonlocal}
\begin{array}{l}
x(a)=M(x),
\end{array}
\end{equation}
where $M \colon C([a,b], E) \to E. $ We assume that (see below formula \eqref{e:contraction} for the symbol $B$)
\begin{itemize}

\item[(\emph{m$_0$})] \vspace{.2cm} there exists a positive constant $ r $ such that $ M\left( C([a,b], r\overline B)\right) \subseteq r\overline B; $
\item[(\emph{m$_1$})] \vspace{.2cm} if $ \{x_n\} \subset C([a,b], r\overline B) $ and $x_n(t) \to x(t)$ for $t \in (a,b]$ with $x \in C([a,b], E), $ then $M(x_n)\to M(x)$ as $n \to \infty$.
\end{itemize}

\smallskip
\begin{remark}\label{r:M}
 By condition (m$_1$), the function $ M $ is clearly continuous, when restricted to $ C([a,b], r\overline B). $

\end{remark}

\smallskip
The study of problem \eqref{e:equation}-\eqref{e:nonlocal} very naturally leads to be performed with a topological method. It was initiated by Byszewski \cite{BY} and the nonlocal condition there is of the type
\begin{equation*}
x(a)+g(t_1, ..., t_p, x(t_1), ..., x(t_p))=x_0, \quad a<t_1<...<t_p\le b,
\end{equation*}
hence possibly nonlinear. Additional results can be found in \cite{BLT}, \cite{BP}, \cite{CR}, \cite{KOWY}, \cite{LLX}, \cite{xue09} and \cite{ZSL12} (see also the references there). A fixed point theorem is used in all these papers such as the Banach contraction principle, the Schauder fixed point theorem or the fixed point theorem for condensing maps; hence, strong growth and regularity assumptions are needed,  such as the compactness of the function involved in the nonlocal condition or the sublinearity of the nonlinear term with respect to the variable $x$.

\noindent A topological degree was introduced  by \'{C}wiszewski-Kokocki \cite{CK} (see also \cite{Koko}) for the study of a periodic problem. A new approximation solvability method, involving a degree argument, was used in Benedetti-Loi-Taddei \cite{BLT}, it allows to treat nonlinear terms satisfying the very general growth condition in \emph{(f)} but requires the continuity of $f(t, \cdot)$ with respect to the weak topology in $E$ for a.a. $t$ and the linearity of $M$. Despite, like the above mentioned results, in this paper we use the invariance of an appropriate topological degree by an homotopic field, our new technique allows to assume the very general growth condition \emph{(f)} and we do not need any compactness or linearity conditions on $M$. However, in order that such an invariance is satisfied, the vector fields need to be fixed-points free on the boundary of their domains (see Section \ref{s:prelim}). This is usually known as the \emph{transversality condition}  which is strictly related to the notion of bounding function (i.e. Lyapunov-like function, see Definition \eqref{d:bf} and Section \ref{s:bounding}).

\smallskip \noindent In some cases (see e.g. \cite{AOmair}, \cite{CR}, \cite{KOWY} and \cite{xue09}) the discussion took place in the multivalued setting. We claim that, with minor changes, the present investigation can be generalized to  multivalued dynamics.

 \begin{defn}\label{d:bf} Let $K \subset E$ be nonempty, open and bounded. A function $V \colon E \to \mathbb{R}$ satisfying
\begin{itemize}
\item[\emph{(V1)}] $ V/_{\partial K}=0, \quad V/_{K}\le 0$;
\item[\emph{(V2)}] $\dis{\liminf_{h \to 0^-}}\frac{V(x+hf(t,x))}{h}<0$, for all $t \in (a,b]$ and $x \in \partial K$;
\end{itemize}
is said to be a bounding function for equation \eqref{e:equation}.
\end{defn}

\noindent By means of this tool we can use the Leray-Schauder degree theory in order to prove the following result.

\begin{theorem}\label{t:main} \ Consider the b.v.p. \eqref{e:equation}-\eqref{e:nonlocal} under conditions \emph{(A)}, \emph{(f)}, \emph{(m$_0$)} and \emph{(m$_1$)}. Let there exists a locally Lipschitzian bounding function $V \colon E \to \mathbb{R}$  of \eqref{e:equation} with $K:=rB $ and $ r $ as in \emph{(m$_0$)}.

\noindent Then problem \eqref{e:equation}-\eqref{e:nonlocal} admits at least one mild solution $x \in C([a,b], r\overline B)$.
\end{theorem}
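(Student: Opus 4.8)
The plan is to recast \eqref{e:equation}--\eqref{e:nonlocal} as a fixed point equation in $C([a,b],E)$ and then to use the homotopy invariance of the Leray--Schauder degree, the bounding function $V$ being exactly what makes the relevant homotopy admissible. Concretely, I would set $\nu:=\nu_{r\overline B}\in L^1([a,b])$ as in \emph{(f)}, take $U$ to be the open ball of radius $r$ about $0$ in $C([a,b],E)$ (so that $\overline U=C([a,b],r\overline B)$) and introduce $\Sigma\colon\overline U\to C([a,b],E)$,
\[
\Sigma(x)(t)=S(t-a)M(x)+\int_a^t S(t-s)f(s,x(s))\,ds,\qquad t\in[a,b].
\]
By Definition \ref{d:mild}, a mild solution of \eqref{e:equation}--\eqref{e:nonlocal} lying in $C([a,b],r\overline B)$ is precisely a fixed point of $\Sigma$ in $\overline U$. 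The first thing to check is that $\Sigma$ is completely continuous: it is well defined by \emph{(f)} and \emph{(m$_0$)}, continuous on $\overline U$ by the continuity of $f$, the strong continuity of $S$, dominated convergence and the continuity of $M$ on $\overline U$ (Remark \ref{r:M}), and compact, since the Cauchy part maps the integrably bounded set $\{g\in L^1([a,b],E):\|g(\cdot)\|\le\nu(\cdot)\}$ onto a relatively compact subset of $C([a,b],E)$ (the semigroup being compact, cf.\ Example \ref{ex:CO} and Theorem \ref{t:semicomp}), while the term $x\mapsto S(\cdot-a)M(x)$ is handled through the reflexivity of $E$ and the compactness of $S(t)$ for $t>0$. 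One then considers the compact homotopy $H(\lambda,x)=\lambda\,\Sigma(x)$, $\lambda\in[0,1]$, with $H(0,\cdot)\equiv 0$: if $\Sigma$ has a fixed point on $\partial U$, that function belongs to $C([a,b],r\overline B)$ and we are done; otherwise it remains to prove that $x\neq\lambda\,\Sigma(x)$ for every $\lambda\in[0,1)$ and $x\in\partial U$ as well, which yields $\deg(i-\Sigma,U)=\deg(i,U)=1$ and hence a fixed point of $\Sigma$ in $U\subset C([a,b],rB)$.

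The heart of the proof is this transversality claim, and this is where $V$ is used. Let $\lambda\in(0,1)$ and $x\in\partial U$ satisfy $x=\lambda\,\Sigma(x)$ (the value $\lambda=0$ being trivial). Then $x(t)\in r\overline B$ for all $t$, $\|x\|_\infty=r$, and $\|x(a)\|=\lambda\|M(x)\|\le\lambda r<r$, so the value $r$ of $t\mapsto\|x(t)\|$ is attained at some $t_0\in(a,b]$, whence $x(t_0)\in\partial K$ with $K=rB$. Putting $v(t):=V(x(t))$, one has $v(t_0)=0$ and $v\le 0$ on $[a,b]$ by \emph{(V1)} and the continuity of $V$, so $t_0$ is a global maximum of $v$. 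For $0<h\le t_0-a$ the mild representation gives the exact identity $S(h)x(t_0-h)=x(t_0)-h\,q_h$, where $q_h:=\tfrac{\lambda}{h}\int_{t_0-h}^{t_0}S(t_0-s)f(s,x(s))\,ds\to\lambda f(t_0,x(t_0))$ as $h\to 0^+$, the integrand being continuous at $s=t_0$. Since $x(t_0-h)\in r\overline B$ and $S(h)$ is a contraction, \eqref{e:contraction} gives $S(h)x(t_0-h)\in r\overline B=\overline K$, hence $V(S(h)x(t_0-h))\le 0$ by \emph{(V1)}; combining this with the local Lipschitz continuity of $V$ near $x(t_0)$ (constant $\ell$) yields
\[
\frac{V\bigl(x(t_0)-h\lambda f(t_0,x(t_0))\bigr)}{h}\le\frac{V\bigl(x(t_0)-h\,q_h\bigr)}{h}+\ell\bigl\|q_h-\lambda f(t_0,x(t_0))\bigr\|\le\ell\bigl\|q_h-\lambda f(t_0,x(t_0))\bigr\|\to 0
\]
as $h\to 0^+$. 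Therefore $\limsup_{h\to 0^+}V(x(t_0)-h\lambda f(t_0,x(t_0)))/h\le 0$, which, after the substitution $\tau=-\lambda h$, contradicts \emph{(V2)} at $(t_0,x(t_0))$; the claim, and hence the theorem, follows.

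The step I expect to be the main obstacle is exactly this last one. Because mild solutions need not be differentiable, the left Dini derivative of $v$ at the interior maximum $t_0$ cannot be computed in the naive way, so the argument that works in the finite dimensional (ordinary differential equations) setting does not transfer directly; the remedy is to replace the would-be derivative of $x$ by the exact identity $S(h)x(t_0-h)=x(t_0)-h\,q_h$ and to exploit the invariance \eqref{e:contraction} of $r\overline B$ under the contraction semigroup, which forces $V(S(h)x(t_0-h))\le 0$ and leaves only the Lipschitz regularity of $V$ and the convergence $q_h\to\lambda f(t_0,x(t_0))$ in play — this is precisely the infinite dimensional bounding function theory developed in Section \ref{s:bounding}. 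A secondary, more standard, technical point is the complete continuity of $\Sigma$ near $t=a$, which rests on the compactness of the semigroup together with the reflexivity of $E$.
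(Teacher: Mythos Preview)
Your transversality argument is essentially the content of Theorem~\ref{t:bf} in the paper and is fine; the real problem is the step you flag as ``secondary and standard'', namely the complete continuity of $\Sigma$. It is not standard, and in fact it fails under the stated hypotheses. At $t=a$ one has $\Sigma(x)(a)=S(0)M(x)=M(x)$, so relative compactness of $\Sigma(\overline U)$ in $C([a,b],E)$ would force $M(\overline U)$ to be relatively compact in $E$. Nothing in \emph{(m$_0$)}--\emph{(m$_1$)} guarantees this: $M$ is only assumed to map $C([a,b],r\overline B)$ into $r\overline B$ and to be sequentially continuous for pointwise convergence on $(a,b]$. Reflexivity of $E$ gives you only weak relative compactness of $M(\overline U)$, which is not enough for the Leray--Schauder degree; and the compactness of $S(t)$ helps only for $t>0$, not at $t=a$. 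Concretely, for the multipoint condition $M(x)=\sum_i\alpha_i x(t_i)$ (which the paper explicitly covers), $M(\overline U)=r\overline B$ is not compact in infinite dimensions, so your $\Sigma$ is not a compact perturbation of the identity and $\deg(i-\Sigma,U)$ is not defined.

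This is precisely why the paper does \emph{not} work with $\Sigma$ directly. Instead it introduces, for each $m$, an operator $\mathcal{T}_m$ that on $[a,a+\tfrac1m]$ equals the constant $\lambda S(\tfrac1m)M(q)$ and on $[a+\tfrac1m,b]$ is the usual mild-solution expression with the integral starting at $a+\tfrac1m$. Now the ``initial'' part is $S(\tfrac1m)M(q)$, and since $S(\tfrac1m)$ is compact this restores complete continuity; a degree argument (with your transversality reasoning supplying the fixed-point-free boundary condition) produces $x_m\in Q$. The second, genuinely nontrivial, step is to pass to the limit $m\to\infty$: a diagonal argument gives pointwise convergence $x_n^{(n)}(t)\to\tilde x(t)$ on $(a,b]$, reflexivity gives $M(x_n^{(n)})\rightharpoonup x_0$, and finally condition \emph{(m$_1$)} --- which was tailored exactly for this purpose --- identifies $x_0=M(x)$. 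In short, your proof sketch collapses the paper's two-step scheme into one, and the collapsed version does not go through without an extra compactness hypothesis on $M$ that the theorem deliberately avoids.
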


 \begin{proof}The proof splits into two parts. By means of the homotopic invariance of the Leray-Schauder degree, we first solve an initial value problem associated to equation \eqref{e:equation} in an arbitrary interval $[a+\frac 1m, b]$ with $m \in \mathbb{N}$ sufficiently large (see \eqref{e:x}-\eqref{e:x(a+1/m)} with $\lambda=1$). The compactness of the semigroup $S(t)$ is fundamental in this reasoning and this is why we restrict to the interval  $[a+\frac 1m, b]$. In such a way we get a sequence $\{x_m\}$ of continuous functions taking values in $r\overline B$. Then we obtain a solution of the original problem \eqref{e:equation}-\eqref{e:nonlocal} by passing to the limit in the sequence $\{x_m\}$. Let $Q:=C([a,b], r\overline B)$ with $r>0$ as in \emph{(m$_0$)} and $m \in \mathbb{N}$ such that $a+\frac 1m <b. $  Notice that, since $ \{S(t)\}_{t\ge 0} $ is a semigroup of contractions, it follows that  $ S(t)r\overline B\subseteq r\overline B, \, t \ge 0.$

\smallskip

\noindent STEP 1. \ We define the map $\mathcal{T}_m \, :\, Q\times [0,1]\to C([a,b], E )$ as follows
\begin{equation}\label{e:DTm}
\mathcal{T}_m(q, \lambda)(t)=\left\{
\begin{array}{ll}\lambda S(\frac 1m)M(q) & t\in [a, a+\frac 1m]\\
\lambda S(t-a)M(q)+\lambda\int_{a+\frac 1m}^t S(t-s)f(s,q(s))ds & t\in[a+\frac 1m, b],
\end{array}
\right.
\end{equation}
which, according to \emph{(A)} and \emph{(f)}, is well defined. \\
With $q, \, \lambda $ and $ m $ as before, let $ y_{q,\lambda}, \eta_{q,\lambda} \colon [a+\frac 1m, b] \to E $ be given by
\begin{equation}\label{e:yeta}
y_{q,\lambda}(t):=\lambda S(t-a)M(q), \qquad \eta_{q,\lambda}(t):=\lambda\int_{a+\frac 1m}^t S(t-s)f(s,q(s))ds;
\end{equation}
notice that
\begin{equation*}
\mathcal{T}_m(q, \lambda)(t)=y_{q,\lambda}(t)+ \eta_{q,\lambda}(t), \qquad t\in[a+\frac 1m, b].
\end{equation*}
By the equality
\begin{equation*}
S(t-\frac 1m -a)\left[\lambda S(\frac1m) M(q)\right]= \lambda S(t-a)M(q), \qquad t\in[a+\frac 1m, b]
\end{equation*}
we obtain that the function $x:=\mathcal{T}_m(q, \lambda)$ is the unique solution (see \cite[Corollary 2.2 p.106]{p}) of the linear initial value problem
\begin{equation}\label{e:ivpz}
\left\{\begin{array}{rl}
z'(t)=&Az(t)+\lambda f(t,q(t)), \enspace t\in[a+\frac 1m,b]\\
z(a+\frac 1m)=&\lambda S(\frac 1m) M(q).
\end{array}
\right.
\end{equation}
In particular, every fixed point $x=\mathcal{T}_m(x, \lambda), $ with $ x\in Q $ and $\lambda \in [0,1] $ is a mild solution of the equation
\begin{equation}\label{e:x}
x^{\prime}(t)=Ax(t)+\lambda f(t,x(t)), \qquad t\in [a+\frac 1m, b]
\end{equation}
(see Definition \ref{d:mild}) which satisfies
\begin{equation}\label{e:x(a+1/m)}
x(a+\frac 1m)=\lambda S(\frac 1m)M(x).
\end{equation}
We will show that $\mathcal{T}_m(\cdot, 1)$ has a fixed point $x_m=\mathcal{T}_m(x_m, 1), $ with $x_m \in Q. $\\

The use of a topological method then arises quite naturally and hence we investigate, in the following, the regularity properties of the map $\mathcal{T}_m$.

\smallskip
\emph{(1a)} \ First we show that $\mathcal{T}_m$ is continuous. In fact, let $\{ q_n\} \subset Q$ satisfying $q_n \to q$ in $C([a,b], E)$ and let $\{ \lambda_n\}\subset [0,1]$ with $\lambda_n \to \lambda$. For every $n \in \mathbb{N}$, the function $x_n :=\mathcal{T}_m(q_n, \lambda_n)$ is such that
\begin{equation}\label{e:xn}
x_n(t)=\left\{ \begin{array}{ll} \lambda_n S(\frac 1m )M(q_n), & t\in [a, a+\frac 1m]\\
y_{q_n,\lambda_n}(t)+\eta_{q_n,\lambda_n}(t) & t\in [a+\frac 1m, b]
\end{array}
\right.
\end{equation}
(see \eqref{e:yeta}). Put $x: =\mathcal{T}_m(q, \lambda)$. Since, by \emph{(A)}, $ \Vert S(t) \Vert \le 1 $ for $ t\ge 0, $  when $t\in [a, a+\frac 1m]$ we have that
\begin{equation*}
\begin{array}{rl}
\Vert x_n(t) -x(t) \Vert =&\Vert \lambda_n S(\frac 1m)M(q_n) -\lambda S(\frac 1m)M(q)\Vert \\
\le &\vert \lambda_n -\lambda\vert \Vert S(\frac 1m) M(q_n) \Vert +\lambda \Vert S(\frac 1m)\left[M(q_n) -M(q) \right] \Vert \\
\le & \vert \lambda_n -\lambda\vert \Vert M(q_n) \Vert +\lambda \Vert M(q_n) -M(q) \Vert .
\end{array}
\end{equation*}
Since $M(q_n) \to M(q)$ in $E$ (see Remark \ref{r:M}) hence, in particular, $\{M(q_n) \}$ is bounded in $E$, we obtain that $x_n \to x$ in $C([a, a+\frac 1m], E)$.\\
Similarly it is easy to prove that
\begin{equation}\label{e:yconv}
y_{q_n, \lambda_n}\to y_{q,\lambda} \enspace \text{in } C([a+\frac 1m, b], E).
\end{equation}
Notice that, by \emph{(f)} and the convergence of $ \{q_n\} $ to $ q, $ it follows that $f(t, q_n(t)) \to f(t, q(t)), \, t \in [a,b]. $ Again by \emph{(f)} the convergence is also dominated since \begin{equation}\label{e:fqn}
\Vert f(t, q_n(t)) \Vert \le \nu_{r\overline B}(t), \quad \text{for a.a. } t \in [a,b].
\end{equation}
By the Lebesgue dominated convergence theorem we conclude that, for every $ t \in [a+\frac 1 m,b], $
\begin{equation*}
\begin{array}{rl}
\Vert \eta_{q_n, \lambda_n}(t) -\eta_{q, \lambda}(t) \Vert \le&\vert \lambda_n - \lambda \vert \int_{a+\frac 1 m}^t \Vert S(t-s) f(s, q_n(s)) ds \Vert + \\
& \hskip .2 cm \lambda \int_{a+\frac 1 m}^t \Vert S(t-s) [f(s, q_n(s)) - f(s, q(s))] \Vert ds \\
\le & \vert \lambda_n -\lambda\vert  \int_{a+\frac 1m}^{b} \nu_{r\overline B}(s) \, ds + \lambda  \int_{a+\frac 1 m}^b \Vert f(s, q_n(s)) - f(s, q(s)) \Vert ds \to 0.
\end{array}
\end{equation*}
Hence $\eta_{q_n, \lambda_n}\to \eta_{q,\lambda}$ in $ C([a+\frac 1m, b], E) $ and by \eqref{e:yconv} this proves that $x_n \to x$ in $C([a,b], E)$, i.e. $\mathcal{T}_m$ is a continuous operator.

\bigskip
\emph{(1b)} \ Now we show that $\mathcal{T}_m$ is compact. To this aim consider a sequence $\{x_n\}\subset \mathcal{T}_m(Q\times [0,1])$ which implies the existence of $\{q_n\} \subset Q$ and $\{\lambda_n\}\subset [0, 1]$ such that $x_n=\mathcal{T}_m(q_n, \lambda_n)$, i.e. $x_n$ satisfies condition \eqref{e:xn}, for all $n \in \mathbb{N}$. With no loss of generality we can restrict to a subsequence, as usual denoted as the sequence, such that $\lambda_n \to \lambda \in [0,1]$.

\noindent First consider the interval $[a, a+\frac 1m]. $  Since, by \emph{(m$_0$)},  the sequence $\{M(q_n)\}\subset r\overline B$ is bounded and the semigroup $\{S(t)\}_{t\ge 0}$ is compact, we obtain that $x_n$ is relatively compact in $C([a, a+\frac 1m], E)$.

\noindent Let $t\in [a+\frac 1m, b]. $  As before the set $\{y_{q_n, \lambda_n}(t)\}\subset E$ is relatively compact in $E$. We prove now that $\{y_{q_n, \lambda_n}\}$ is equicontinuous in $ [a+\frac 1m, b]$. Let, in fact, $t \in [a+\frac 1m, b]$ and $\varepsilon >0$. The compactness of $\{S(t)\}_{t\ge 0}$ implies the continuity of $S \colon (0, \infty) \to \mathcal{L}(E) $ in the uniform operator topology (see \cite[Theorem 3.2 p. 49]{p}). Hence we can find $\delta=\delta(\varepsilon)>0$ satisfying
\begin{equation*}
\Vert S(t-a)-S(t^{\, \prime}-a)\Vert \le\frac{\varepsilon }{r}, \quad \vert t-t^{\, \prime}\vert <\delta \text{ with } t^{\, \prime}\in [a+\frac 1m, b].
\end{equation*}
 Therefore, by \emph{(m$_0$)}
\begin{equation*}
\Vert y_{q_n, \lambda_n}(t) -y_{q_n, \lambda_n}(t^{\, \prime})\Vert \le \lambda_n \Vert S(t-a)-S(t^{\, \prime}-a)\Vert \Vert M(q_n)\Vert \le\frac{\varepsilon }{r} \, \cdot \, r=\varepsilon,
\end{equation*}
for every $ n\in \mathbb{N} $ and $ t,t' \in [a+\frac 1m, b] $ with $ \vert t-t^{\, \prime}\vert <\delta. $\\
Then, by the abstract version of the Ascoli-Arzel\'a theorem, we obtain that
\begin{equation}\label{e:ycompact}
\{y_{q_n, \lambda_n}\} \text{ is relatively compact in } C([a+\frac 1m, b], E).
\end{equation}

\noindent Consider now the sequence $\{\eta_{q_n, \lambda_n}\}$ defined in \eqref{e:yeta}.
Fix $t\in [a+\frac 1m, b]$ and let $h_{n,t}(s):=S(t-s)f(s, q_n(s))$ for $n \in \mathbb{N}$ and $s\in [a+\frac 1m, t]$. According to \eqref{e:fqn} and since, by \emph{(A)}, $ \Vert S(t) \Vert \le 1 $ for $ t\ge 0, $ we have that $\Vert h_{n,t}(s)\Vert \le \nu_{r\overline B}(s)$ for a.a. $s\in [a+\frac 1m, t]$, hence $\{h_{n,t}\}$ is integrably bounded in $ [a+\frac 1m, t]$. Moreover, condition \eqref{e:fqn} implies that $\{f(s, q_n(s))\}$ is bounded in $E$ for a.a. $s\in [a+\frac 1m, t]$ and by the compactness of the semigroup $\{S(t)\}_{t\ge 0}$ we obtain that the sequence $\{h_{n,t}(s)\}$ is relatively compact for a.a. $s \in [a+\frac 1m, t]$. In conclusion the sequence $\{h_{n,t}\}$ is semicompact in $ [a+\frac 1m, t]$ (see Definition \ref{d:semi}).\\

\noindent Let us introduce now the operator $\hat S \colon L^1( [a+\frac 1m, t], E) \to C( [a+\frac 1m, t], E)$ given by
\begin{equation*}
\hat S \varphi (\tau):= \int_{a+\frac 1m}^{\tau} \varphi(s)\, ds.
\end{equation*}
It is easy to see that $\hat S$ satisfies both conditions \emph{(i)} and \emph{(ii)} of Theorem \ref{t:semicomp}; indeed $\hat S$ is the Cauchy operator (see Example \ref{ex:CO}) in the special case when the semigroup is identically equal to $I$. By virtue of Theorem \ref{t:semicomp}, the sequence $\{\eta_{q_n, \lambda_n}\}$   is relatively compact in $C([a+\frac 1m, t], E). $ Hence, in particular, $\{\eta_{q_n, \lambda_n}(t)\}$ is relatively compact in $E$ for all $t \in [a+\frac 1m, b]$.\\

\noindent We prove now that $\{\eta_{q_n, \lambda_n}\}$ is equicontinuous. In fact, fix $t \in [a+\frac 1m, b]$ and let $t^{\, \prime}\in [a+\frac 1m, b]$ with $t^{\, \prime}>t$. Notice that
\begin{equation*}
\Vert \eta_{q_n, \lambda_n}(t^{\, \prime}) -\eta_{q_n, \lambda_n}(t) \Vert = \lambda_n\Vert\int_{a+\frac 1m}^{t^{\, \prime}} S(t^{\, \prime}-s)f(s, q_n(s))\, ds-\int_{a+\frac 1m}^{t} S(t-s)f(s, q_n(s))\, ds\Vert.
\end{equation*}
For every given $\sigma \in (0, t-\frac 1m-a)$, we can then estimate  $\Vert \eta_{q_n, \lambda_n}(t^{\, \prime}) -\eta_{q_n, \lambda_n}(t) \Vert$ by means of the sum of the following three integrals
\begin{equation}\label{e:zn}
\begin{array}{rl}
\Vert \eta_{q_n, \lambda_n}(t^{\, \prime}) -\eta_{q_n, \lambda_n}(t) \Vert\le & \lambda_n \Vert \int_{a+\frac 1m}^{t-\sigma} [S(t^{\, \prime}-s)-S(t-s)]f(s, q_n(s))\, ds\Vert \\
\\
+& \lambda_n \Vert \int_{t-\sigma}^{t} [S(t^{\, \prime}-s)-S(t-s)]f(s, q_n(s))\, ds\Vert  \\
\\
+ & \lambda_n \Vert \int_{t}^{t^{\, \prime}} S(t^{\, \prime}-s)f(s, q_n(s))\, ds\Vert.
\end{array}
\end{equation}
Now fix $\varepsilon>0. $ With no loss of generality we can take $\sigma >0$ such that
\begin{equation}\label{e:6D}
\int_{t-\sigma}^t  \nu_{r \overline B}(s) \, ds <\frac{\varepsilon}{6}.
\end{equation}
Let us start from the first integral in \eqref{e:zn}. Notice that $t^{\, \prime}-s, t-s \in [\sigma, t^{\, \prime}-\frac 1m-a]$ for  $s \in [a+\frac 1m, t-\sigma]$. Since $\{S(t)\}_{t\ge 0}$ is compact, it is also uniformly continuous in the compact interval $ [\sigma, t^{\, \prime}-\frac 1m-a]$.  We can then find $\sigma_1(\varepsilon)>0$ such that
\begin{equation*}
\Vert S(\tau^{\, \prime})-S(\tau)\Vert \le \frac{\varepsilon}{3\Vert \nu_{r \overline B}\Vert}, \quad \vert \tau^{\, \prime}-\tau\vert <\sigma_1, \quad \tau^{\, \prime}, \tau \in \left[\sigma, t^{\, \prime}-\frac 1m -a\right].
\end{equation*}
Since $t^{\, \prime}-s-(t-s)=t^{\, \prime}-t$ for $s\in [a+\frac 1m, t-\sigma]$, when $0<t^{\prime}-t <\sigma_1, $ by \eqref{e:fqn} and since $ \Vert S(t) \Vert \le 1 $ for $ t\ge 0, $ we have
\begin{equation*}
\begin{array}{rl}
\lambda_n \Vert \int_{a+\frac 1m}^{t-\sigma} [S(t^{\, \prime}-s)-S(t-s)]f(s, q_n(s))\, ds\Vert \le& \int_{a+\frac 1m}^{t-\sigma} \Vert S(t^{\, \prime}-s)-S(t-s)\Vert \Vert f(s, q_n(s))\Vert \, ds \\
\le & \frac{\varepsilon}{3\Vert \nu_{r \overline B}\Vert }\int_a^b \Vert f(s, q_n(s)) \Vert \, ds \le \frac{\varepsilon}{3}.
\end{array}
\end{equation*}
According to \eqref{e:6D} the second integral in \eqref{e:zn} is such that
\begin{equation*}
\begin{array}{rl}
\lambda_n\Vert \int_{t-\sigma}^t [S(t^{\, \prime}-s)-S(t-s)]f(s, q_n(s))\, ds\Vert  \le &  \int_{t-\sigma}^t [\Vert S(t^{\, \prime}-s)\Vert + \Vert S(t-s)\Vert] \Vert f(s, q_n(s))\Vert \, ds\\
\le & 2\int_{t-\sigma}^t \nu_{r\overline B}(s) \, ds\le \frac{\varepsilon}{3}.
\end{array}
\end{equation*}
Moreover, let  $\sigma_2>0$ be such that
\begin{equation*}
\int_{t}^{t^{\, \prime}}\nu_{r\overline B}(s) \, ds \le \frac{\varepsilon}{3}, \quad \text{for }  t^{\, \prime}-t<\sigma_2;
\end{equation*}
hence we obtain the following estimate for the third integral in \eqref{e:zn}
\begin{equation*}
\lambda_n\Vert \int_{t}^{t^{\, \prime}} S(t^{\, \prime}-s)f(s, q_n(s)) \, ds \Vert \le \int_{t}^{t^{\, \prime}}\nu_{r\overline B}(s) \, ds \le \frac{\varepsilon}{3}.
\end{equation*}
Consequently, when $t^{\, \prime}-t <\min\{\sigma_1, \sigma_2\}$, we have that $\Vert \eta_{q_n, \lambda_n}(t^{\, \prime})-\eta_{q_n, \lambda_n}(t)\Vert <\varepsilon; $ since the reasoning is similar also in the case $t^{\, \prime}\in [a+\frac 1m, t]$ we conclude that $\{\eta_{q_n, \lambda_n}\}$ is equicontinuous in $[a+\frac 1m, b]$ and again we can use the abstract version of Arzel\`a-Ascoli theorem in order to show that  $\{\eta_{q_n, \lambda_n}\}$ is relatively compact in $C([a+\frac 1m, b], E)$. Therefore, by \eqref{e:ycompact} and the estimates in the interval $[a, a+\frac 1m]$ (see the beginning of (1b)),  we have that $\{x_n\}$ is relatively compact and so the operator $\mathcal{T}_m$ is compact. In conclusion   $\mathcal{T}_m$ is completely continuous since it is both continuous and compact.

\bigskip

\emph{(1c)} \ For every $q\in Q$ we have that $\mathcal{T}_m(q, 0)\equiv 0$ and since $0\in rB$, it implies that $\mathcal{T}_m(Q, 0) \subset \mbox{int} \, Q$.

\bigskip
\emph{(1d)} \ We apply now a degree argument for the study of the fixed points of $\mathcal{T}_m(\cdot, 1) $ and then we need to show that $\mathcal{T}_m(\cdot, \lambda)$ is fixed points free on $\partial Q$ for every $\lambda \in [0,1]$. The case $\lambda=0$ was already treated in \emph{(1c)}. Any possible fixed point $x\in \partial Q$ for $\lambda=1$, i.e. satisfying $x = \mathcal{T}_m(x, 1)$, is already a solution of our problem. So, it remains to show that $\mathcal{T}_m(\cdot, \lambda)$ is fixed-points free on $\partial Q$ only for $\lambda \in (0,1)$. We reason by  contradiction and assume the existence of $(x, \lambda) \in \partial Q \times (0,1)$ satisfying $x=\mathcal{T}_m(x, \lambda)$. According to the definition of $Q$, there exists $t_0 \in [a,b]$ such that $\Vert x(t_0) \Vert=r. $ By \emph{(m$_0$)} and since $\{S(t)\}_{t\ge 0} $ is a semigroup of contractions, the case $t_0 \in [a, a+\frac 1m]$ leads to the contradictory conclusion
\begin{equation*}
r=\Vert x(t_0) \Vert= \lambda \Vert S(\frac 1m)M(x)\Vert \le \lambda r <r.
\end{equation*}
Consequently $t_0 \in (a+\frac 1m, b]; $ we recall that $x(t)$ is a mild solution of the equation
\begin{equation}\label{e:g0}
x^{\prime}(t)=Ax(t)+h(t,x(t)), \quad t\in [a+\frac 1m, b]
\end{equation}
where
\begin{equation}\label{e:flambda}
h(t,x):=\lambda f(t,x), \quad (t,x)\in [a,b]\times E.
\end{equation}
Notice that $ h $ satisfies conditions \emph{(f)}.  We show that $ V $ is a bounding function for \eqref{e:g0} with $ K=rB, $ in particular that $ V $ satisfies condition \emph{(V2)}. In fact, let $x \in E$ with $\Vert x \Vert =r $ and $t \in (a, b]$; since $ V $ is a bounding function for \eqref{e:equation} there is a sequence $\{h_n\}\subset \mathbb{R}$, with $h_n \to 0^-$ such that
\begin{equation*}
\lim_{n\to \infty}\frac{V(x+h_nf(t,x))}{h_n}<0.
\end{equation*}
Put $k_n:=\frac{ h_n}{\lambda}, \, n \in \mathbb{N}$; we have that $k_n \to 0^-$ and
\begin{equation*}
\lim_{n\to \infty}\frac{V(x+ k_ng(t,x))}{k_n}=\lim_{n\to \infty}\frac{V(x+ k_n \lambda f(t,x))}{k_n}=\lambda \, \lim_{n\to \infty}\frac{V(x+ h_nf(t,x))}{h_n}<0.
\end{equation*}
 Hence $ V $ is a bounding function for \eqref{e:g0} and by applying Theorem \ref{t:bf} to \eqref{e:g0} we obtain that $\Vert x(t_0) \Vert <r. $ In conclusion $\Vert x(t) \Vert <r $ for all $t \in [a+\frac 1m, b] $ and hence $\mathcal{T}_m$ is fixed points free on the boundary $\partial Q$.

\bigskip
\emph{(1e)} \ The properties already proved imply that $\mathcal{T}_m$ is an admissible homotopy which connects the fields $\mathcal{T}_m(\cdot, 0)$ and $\mathcal{T}_m(\cdot, 1)$. According to the homotopic invariance and the normalization property of the Leray-Schauder topological degree, we then obtain
\begin{equation*}
deg( i-\mathcal{T}_m(\cdot, 1), \, Q)=deg( i-\mathcal{T}_m(\cdot, 0), \, Q)=1
\end{equation*}
and hence there exists $x_m \in Q$ such that $x_m=\mathcal{T}_m(x_m, 1)$, i.e. $x_m(t) \equiv S(\frac 1m)M(x_m) $ for $t \in[a, a+\frac 1m]$ and $x_m$ is a solution of the initial value problem \eqref{e:x}-\eqref{e:x(a+1/m)} for $\lambda=1$ on $[a+\frac 1m, b]; $ moreover $\Vert x_m(t) \Vert \le r$ for $t \in [a,b]$ with $r$ introduced in \emph{(m$_0$)}.

\bigskip
\noindent STEP 2. \ In this part we consider the sequence of functions $\{x_m\}$ obtained in the previous step and, by passing to the limit, we get a solution of problem \eqref{e:equation}-\eqref{e:nonlocal}. According to \eqref{e:DTm}, we recall that
\begin{equation}\label{e:xm}
x_m(t) =\left\{\begin{array}{rl}
S(\frac 1m)M(x_m) \quad& t\in [a, a+\frac 1m]\\
S(t-a)M(x_m)+\int_{a+\frac{1}{m}}^t S(t-s)f(s, x_m(s)) \, ds, \quad& t \in [a+ \frac 1m, b].
\end{array}
\right.
\end{equation}

\smallskip
\emph{(2a)} \ Take $\alpha \in (a,b]$ and let $m$ be sufficiently large so that $a+\frac 1m <\alpha. $ Since $\{x_m\}\subset Q$, according to \emph{(f)} we have that $\Vert f(t, x_m(t)) \Vert \le\nu_{ r\overline B}(t)$ for a.a. $t \in [a,b]$. Hence, with a similar reasoning as in \emph{(1b)}, we can show that $\{x_m\}$ is relatively compact in $[\alpha, b]$.

\smallskip
\emph{(2b)} \ Fix a decreasing sequence $\{ a_n \}\subset (a,b)$ satisfying $a_n \to a$ as $n \to \infty$. According to (2a), $ \{ x_m\} $ is relatively compact in $ C([a_1,b],E); $ hence there is a subsequence $ \{ x_m^{(1)}\} $ converging in $ C([a_1,b],E) $ to a continuous function $ \overline x:[a_1,b] \to E. $ Similarly, there exists a subsequence $ \{ x_m^{(2)} \} $ of $\{ x_{m}^{(1)} \}$ converging in $ C([a_2,b],E) $ to a continuous function $ \overline {\overline x}: [a_2,b] \to E $ and, according to the unicity of the limit, $ \overline x(t) = \overline {\overline x}(t) $ for $ t \in [a_1,b]. $  Proceeding by induction, for every $n \in \mathbb{N}$ we can find a sequence $\{ x_{m}^{(n)} \}$ which is a subsequence of $\{ x_{m}^{(n-1)} \}$ and converges in $C([a_n, b], E)$. According to the unicity of the limit, we can define a continuous function $ \tilde x:(a,b] \to E$ and, using a Cantor diagonalization argument, the sequence $ x_n^{(n)} (t) \to \tilde x(t)$ for $ t\in (a, b]$. By the continuity of $ f $ (see condition \emph{(f)}), it implies that $f(t, x_{n}^{(n)} (t)) \to f(t, \tilde x(t))$ for all $t \in (a,b]$ and also that $t \longmapsto f(t, \tilde x(t)) $ is continuous on $(a, b]. $  Moreover, since $\{ x_m \} \subset Q$, again by \emph{(f)} we have that $\Vert f(t, x_{n}^{(n)}(t)) \Vert \le \nu_{r\overline B}(t)$ and hence $\Vert f(t, \tilde x(t)) \Vert \le \nu_{r\overline B}(t)$ for a.a. $t\in (a,b]$, with $\nu_{r\overline B} \in L^1([a,b])$.

\smallskip
\emph{(2c)} \ Since $\{ x_{n}^{(n)}\} \subset C([a,b], r\overline B), $ by \emph{(m$_0$)} we obtain that $\{M(x_{n}^{(n)})\}\subset r\overline B. $ Hence, by the reflexivity of $ E, $ there is a subsequence, as usual denoted as the sequence, satisfying
\begin{equation}\label{e:x0}
M\left(x_{n}^{(n)}\right)\rightharpoonup x_0 \enspace  \mbox{as } n \to \infty, \enspace \text{with } \Vert x_0 \Vert \le r.
\end{equation}

\smallskip
\emph{(2d)} \ Let us introduce, now, the continuous function
\begin{equation}\label{e:solution}
x(t):=S(t-a)x_0+\int_a^t S(t-s)f(s, \tilde x(s)) \, ds, \qquad t \in [a,b],
\end{equation}
with $x_0$ as in \emph{(2c)} and $\tilde x$ defined in \emph{(2b)}. The integral in \eqref{e:solution} is well defined by the regularity of the function $t \longmapsto f(t, \tilde x(t)) $ in $[a, b] $ showed in \emph{(2b)}. We claim that
\begin{equation}\label{e:conv}
x_{n}^{(n)}(t) \to x(t), \enspace t \in (a,b], \qquad \mbox{as } n \to \infty.
\end{equation}
In fact, by \eqref{e:xm} and the definition of $ \{ x_{n}^{(n)}\}, $ there is a sequence $ \{p_n\}, $ with $ 0<p_n \le 1/n, \, n \in \mathbb{N}, $ such that
\begin{equation}\label{e:xpn}
x_{n}^{(n)}(t)=\left\{
\begin{array}{ll} S(p_n)M(x_{n}^{(n)}) & t\in [a, a+p_n]\\
S(t-a)M(x_{n}^{(n)})+\int_{a+p_n}^t S(t-s)f(s,x_{n}^{(n)}(s))ds & t\in[a+p_n, b].
\end{array}
\right.
\end{equation}
Let $t \in (a,b]. $ By \eqref{e:x0} we have that
\begin{equation*}
S(t-a)M(x_{n}^{(n)})\rightharpoonup S(t-a)x_0.
\end{equation*}
Now take $n \ge \overline n $ such that $ t \in [a+p_n, b] $ for every $ n \ge \overline n. $ Notice that
\begin{equation*}
\int_{a+p_n }^t S(t-s)f(s, x_{n}^{(n)}(s)) \, ds= \int_a^t S(t-s)f(s, x_{n}^{(n)}(s)) \, ds-\int_a^{a+p_n } S(t-s)f(s, x_{n}^{(n)}(s)) \, ds.
\end{equation*}
By the properties showed in \emph{(2b)}, we have
\begin{equation*}
S(t-s)f(s, x_{n}^{(n)}(s)) \to S(t-s)f(s, \tilde x(s)), \enspace  s \in (a, t].
\end{equation*}
Moreover $\Vert S(t-s)f(s, x_{n}^{(n)}(s)) \Vert \le \nu_{r\overline B}(s)$  a.e. in $[a,t], $ with $\nu_{r\overline B} \in L^1([a, t])$. So the Lebesgue Dominated Convergence Theorem leads to
\begin{equation*}
\int_a^t S(t-s)f(s, x_{n}^{(n)}(s)) \, ds \to \int_a^t S(t-s)f(s, \tilde x(s)) \, ds,  \enspace \text{as } n \to \infty.
\end{equation*}
Since $0<p_n \le 1/n, \, n \in \mathbb{N}, $ we have
\begin{equation*}
\int_a^{a+p_n} S(t-s)f(s, x_{n}^{(n)}(s)) \, ds \to 0 \enspace \text{as } n \to \infty;
\end{equation*}
 hence, by \eqref{e:xpn} , $x_{n}^{(n)}(t) \rightharpoonup x(t). $  Since we showed in \emph{(2b)} that $x_{n}^{(n)}(t) \to \tilde x(t)$ for $t \in (a, b]$, we obtain that $\tilde x(t)=x(t)$ and hence, by \eqref{e:solution}, $x$ is a mild solution of \eqref{e:equation} in $[a,b]$. By condition \emph{(m$_1$)} we get that $M(x_{n}^{(n)}) \to M(x) $ thus \eqref{e:x0} implies that $ x(a)=x_0=M(x) $ and hence $x$ satisfies the boundary condition \eqref{e:nonlocal}. The proof is complete.
 \end{proof}

\section{Nonlocal solutions of the parabolic equation \eqref{e:PE}} \label{s:application}
In this part we apply the methods and results contained in Section \ref{s:abstrat} for the study of nonlocal boundary value problems associated to equation \eqref{e:PE}. We prove, in particular, Theorem \ref{t:nonPEP} as a special case of the following more general result (see Theorem \ref{t:PEgen}).\\
We put
\begin{equation} \label{e:maxg}
\mu:= \max_{ t\in [0, T]} \vert g(t,0)\vert.
\end{equation}
The definition is well posed by the continuity of $g$ (see condition \eqref{e:gandk}\emph{(i)}).
\begin{theorem}\label{t:PEgen}
Consider problem \eqref{e:PE}-\eqref{e:D}-\eqref{e:nonPE}. Let $a_{i,j}\in C^1(\overline D), \, i,j=1,...,n$ satisfy conditions \eqref{e:sym}, \eqref{e:elliptic} and  assume \eqref{e:gandk}; suppose \emph{($m_0$)} and \emph{($m_1$)} hold (see Section \ref{s:abstrat}) with $E=L^p(D)$. If, moreover,
\begin{itemize}
\item[(a)] $b>L+\vert D\vert $

\item [(b)] $\displaystyle{r>\frac{\left( L+\mu\right)\vert D\vert}{b-\vert D\vert-L}}$,
\end{itemize}
with $\mu$ as in \eqref{e:maxg}, then \eqref{e:PE}-\eqref{e:D}-\eqref{e:nonPE} is solvable.

\end{theorem}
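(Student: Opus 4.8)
The plan is to recast \eqref{e:PE}--\eqref{e:D}--\eqref{e:nonPE} as the abstract problem \eqref{e:equation}--\eqref{e:nonlocal} of Section~\ref{s:abstrat}, with $[a,b]=[0,T]$, $E=L^p(D)$ (which is reflexive since $1<p<\infty$), $A=A_p$ as in \eqref{e:divergence}, and
\begin{equation}\label{e:fH}
f(t,x)(\xi):=\int_D k(\xi,y)\,x(y)\,dy-b\,x(\xi)+g(t,x(\xi)),\qquad t\in[0,T],\ x\in L^p(D),\ \text{a.a. }\xi\in D,
\end{equation}
and then to invoke Theorem~\ref{t:main}. I would first check \emph{(A)}: by Theorem~\ref{t:compact} together with \cite[Theorem 3.6 p.\,215]{p}, $A_p$ generates a compact $C_0$-semigroup of contractions on $L^p(D)$. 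For \emph{(f)}, the linear maps $x\mapsto\int_D k(\cdot,y)x(y)\,dy$ (which takes values in $L^\infty(D)\hookrightarrow L^p(D)$, since $0\le k\le 1$ and $D$ is bounded) and $x\mapsto-bx$ are bounded on $L^p(D)$; moreover \eqref{e:gandk}\emph{(i)} gives $\max\{|s|^\beta,|s|\}\le 1+|s|$, hence $|g(t,s)|\le(\mu+L)+L|s|$ with $\mu$ as in \eqref{e:maxg}, so this linear growth together with the continuity of $g$ makes the superposition map $(t,x)\mapsto g(t,x(\cdot))$ continuous from $[0,T]\times L^p(D)$ into $L^p(D)$ (passing to a.e.\ convergent subsequences and using dominated convergence) and yields $\|f(t,x)\|_{L^p(D)}\le c\,(1+\|x\|_{L^p(D)})$ for some $c>0$; thus $\nu_\Omega$ in \emph{(f)} may be taken constant on each bounded $\Omega\subset L^p(D)$. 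The hypotheses \emph{($m_0$)} and \emph{($m_1$)} are assumed.

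Next I would exhibit the bounding function required by Theorem~\ref{t:main}: with $r$ as in \emph{($m_0$)} and $K:=rB\subset L^p(D)$, put $V(x):=\|x\|_{L^p(D)}^p-r^p$. For $1<p<\infty$ the functional $x\mapsto\|x\|_{L^p(D)}^p=\int_D|x|^p\,d\xi$ is continuously Fr\'echet differentiable on $L^p(D)$, so $V$ is locally Lipschitzian, and plainly $V/_{\partial K}=0$, $V/_{K}<0$, i.e.\ \emph{(V1)} holds. To verify \emph{(V2)}, fix $t\in(0,T]$ and $x\in L^p(D)$ with $\|x\|_{L^p(D)}=r$; by differentiability of $V$,
\begin{equation*}
\lim_{h\to0^-}\frac{V\bigl(x+h f(t,x)\bigr)}{h}=p\int_D|x(\xi)|^{p-2}x(\xi)\,f(t,x)(\xi)\,d\xi ,
\end{equation*}
so it suffices to show this integral is strictly negative.

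To estimate the integral I would split it according to \eqref{e:fH}. The summand $-b\,x$ contributes exactly $-b\,r^p$; using $0\le k\le 1$ and Hölder's inequality the nonlocal summand is bounded by $|D|\,r^p$; and using the growth estimate $|g(t,s)|\le(\mu+L)+L|s|$ together with Hölder the reaction summand is bounded by $L\,r^p$ plus a term of lower order $r^{p-1}$ with coefficient proportional to $(\mu+L)$ and $|D|$. Collecting the three contributions one obtains an inequality of the form
\begin{equation*}
\int_D|x|^{p-2}x\,f(t,x)\,d\xi\ \le\ (|D|+L-b)\,r^p+(\mu+L)\,|D|\,r^{p-1}=r^{p-1}\bigl[(|D|+L-b)\,r+(\mu+L)\,|D|\bigr];
\end{equation*}
by \emph{(a)} the coefficient $|D|+L-b$ is negative, and the lower bound \emph{(b)} on $r$ then forces the right-hand side to be strictly negative. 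Hence \emph{(V2)} holds, and $V$ is a locally Lipschitzian bounding function for \eqref{e:equation} with $K=rB$.

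Theorem~\ref{t:main} now yields a mild solution $x\in C([0,T],r\overline B)$ of \eqref{e:equation}--\eqref{e:nonlocal}; setting $u(t,\xi):=x(t)(\xi)$ gives, by the notion of solution recalled in Section~\ref{s:intro}, a solution of \eqref{e:PE}--\eqref{e:D} fulfilling \eqref{e:nonPE}. Theorem~\ref{t:nonPEP} then follows by specializing $M$: for $M(x)=\frac1T\int_0^T x(t)\,dt$ and for $M(x)=\sum_{i=1}^q\alpha_i x(t_i)$ with $\sum_i|\alpha_i|\le 1$, the conditions \emph{($m_0$)} and \emph{($m_1$)} hold for every $r>0$, so when $b>L+|D|$ one picks $r$ large enough to meet \emph{(b)}. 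I expect the crux to be \emph{(V2)}: one must offset the nonlocal diffusion and the sublinear reaction against the absorption term $-b\,x$ inside the $L^p$-duality pairing, and it is precisely this balance that produces the quantitative requirements \emph{(a)} and \emph{(b)}; a secondary, more routine point to be checked carefully is the $L^p$-continuity of the superposition operator built from $g$.
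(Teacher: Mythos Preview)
Your proposal is correct and follows essentially the same route as the paper: cast the problem abstractly, verify \emph{(A)} via Theorem~\ref{t:compact}, check \emph{(f)} for the superposition operator, and invoke Theorem~\ref{t:main} with a bounding function built from the $L^p$-norm. The only cosmetic difference is that the paper takes $V_r(x)=\tfrac12(\|x\|^2-r^2)$ (whose derivative is the duality map $J$) rather than your $V(x)=\|x\|^p-r^p$; on the sphere $\|x\|=r$ both choices reduce to testing the sign of the same integral $\int_D|x|^{p-2}x\,f(t,x)\,d\xi$, and the H\"older estimates leading to conditions \emph{(a)}--\emph{(b)} are identical.
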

\begin{proof} \ We reformulate \eqref{e:PE}-\eqref{e:D}-\eqref{e:nonPE} in the  abstract setting \eqref{e:equation}-\eqref{e:nonlocal} where $ E:=L^p(D), \, 1<p<\infty. $ \\ The linear elliptic partial differential operator in divergence form $A_p$ introduced in \eqref{e:divergence} generates a compact $C_0$-semigroup of contractions $\{S(t)\}_{t\ge 0} $ (see Theorem \ref{t:compact}).  \\
 Given $ \eta \in E $ we denote, as usual, with $ \vert \eta\vert $ the map $ \xi \longmapsto \vert \eta(\xi)\vert $ for a.a. $ \xi \in D. $ Consider  $\beta \in (0,1); $ since $ D $ is bounded and  $ c>0 $ implies  $ \displaystyle{c^{\beta}\le \max\{ 1, \, c\}}, $ it is clear that also $\vert \eta \vert^{\beta} \in E. $ Moreover it is easy to see that $ \vert \eta \vert^{p\beta} \in L^{1/\beta} (D) $ and the following estimate is satisfied
\begin{equation}\label{e:eta}
\left \Vert \vert \eta \vert^{\beta} \right \Vert \le \vert D \vert^{\frac{1-\beta}{p}} \cdot \Vert \eta \Vert^{\beta}, \enspace \eta \in E.
\end{equation}

\smallskip
\noindent We introduce the function $ f \colon [0, T]\times E \to E$ defined by
\begin{equation}\label{e:fH}
f(t,\eta)(\xi):=\int_D k(\xi,y) \eta(y) \, dy-b\eta(\xi)+g(t, \eta(\xi)), \enspace \text{for a.a. } \xi \in D.
\end{equation}
By \eqref{e:gandk} we have
\begin{equation*}
\vert f(t,\eta)(\xi)\vert \le \int_{D} \vert \eta(y)\vert \, dy +b\vert \eta(\xi)\vert + \vert g(t, \eta(\xi))-g(t,0)\vert +\vert g(t,0)\vert,
\end{equation*}
for $ t \in [0,T] $ and a.a. $ \xi \in D. $
Hence, by  the H\"{o}lder inequality, we obtain
\begin{equation}\label{e:about f}
\begin{array}{rl}\vert f(t,\eta)(\xi) \vert\le &\vert D\vert^{1-\frac 1p} \Vert \eta \Vert +b\vert \eta(\xi)\vert +L\max\{\vert \eta(\xi)\vert, \, \vert \eta(\xi)\vert^{\beta}\}+ \mu\\
&\vert D\vert^{1-\frac 1p} \Vert \eta \Vert+(b+L)\vert \eta(\xi)\vert +L+\mu, \enspace \text{for a.a. } \xi \in D
\end{array}
\end{equation}
with $\mu$ as in \eqref{e:maxg}, implying that $f$ is well-defined. Therefore, in abstract setting, equation \eqref{e:PE} takes the form \eqref{e:equation} with $x(t)=u(t, \cdot). $\\
Now we show that $f$ is continuous. Let $(t_n, \eta_n) \to (t,\eta)$ in $[0, T]\times E. $
By the continuity of $g$ we have that $g(t_n, \eta(\xi)) \to g(t, \eta(\xi)), $ for a.a. $ \xi \in D$ and the convergence is dominated in $ E $ since
\begin{equation*}
\begin{array}{rl}
\vert g(t_n, \eta(\xi))-g(t, \eta(\xi))\vert \le & \vert g(t_n, \eta(\xi))-g(t_n, 0)\vert + \vert g(t, \eta(\xi))-g(t, 0)\vert\\
& + \, \vert g(t_n, 0)\vert + \vert g(t, 0)\vert \\
\le  & 2L(1+\vert \eta(\xi)\vert)+2\mu, \enspace \text{for a.a. } \xi \in D.
\end{array}
\end{equation*}
Therefore
\begin{equation}\label{e:g}
\Vert g(t_n, \eta(\cdot)) -g(t, \eta(\cdot)) \Vert \to 0, \enspace \text{as }n \to \infty, \text{ in } E.
\end{equation}
By \eqref{e:gandk} and the H\"{o}lder inequality, we have
\begin{equation*}
\begin{array}{rl}
\vert  f(t_n, \eta_n)(\xi)-f(t, \eta)(\xi)\vert \le & \vert  f(t_n, \eta_n)(\xi)-f(t_n, \eta)(\xi)\vert+
\vert  f(t_n, \eta)-f(t, \eta)\vert(\xi)\\
\le &\vert D \vert^{1-\frac1 p}\Vert \eta_n -\eta \Vert +(b+L) \vert \eta_n(\xi)-\eta(\xi)\vert + L\vert \eta_n(\xi)-\eta(\xi) \vert^{\beta}\\
& +\vert g(t_n, \eta(\xi))-g(t, \eta(\xi))\vert.
\end{array}
\end{equation*}
Hence, by \eqref{e:eta} and \eqref{e:g}, $f(t_n, \eta_n) \to f(t,\eta)$ in $ E $ and $ f $ is continuous.\\
We prove that $ f $ satisfies also the  growth condition in \emph{(f)}. So, let $\Omega \subset E$ be bounded and take $\eta \in \Omega. $ By the estimate
\begin{equation}\label{e:abp}
(a+b)^p \le 2^p (a^p + b^p) \quad \text{for } a,b \ge 0 \text{ and } p>1,
\end{equation}
and according to \eqref{e:about f}  we have
\begin{equation*}
\Vert  f(t, \eta)\Vert^p = \int_{D}\left[ \vert f(t, \eta)\vert(\xi)\right]^p \, d\xi\le  2^p(b+L)^p\Vert \eta \Vert^p +2^p \left( \vert D \vert^{1-\frac1p}\Vert \eta \Vert +L+\mu\right)^p \vert D \vert,
\end{equation*}
for all $t \in [0,T] $ and hence \emph{(f)} is satisfied.

\smallskip
 By assumption, the nonlocal condition \eqref{e:nonPE} satisfies both \emph{(m$_0$)} and \emph{(m$_1$)}.

\smallskip
It remains to show the existence of a locally Lipschitzian bounding function $V \colon E \to \mathbb{R}$ (Definition \ref{d:bf}) with $K=rB$ and $r$ as in \emph{($m_0$)}. Consider the function
\begin{equation*}
V_r(x)=\frac 12 \left(\Vert x\Vert^2-r^2 \right), \quad x\in E.
\end{equation*}
$V_r$ is locally Lipschitzian; it is  also Fr\'{e}chet  differentiable, since $L^p(D)$ is uniformly convex, and
$$
\langle \dot V_r(y), z \rangle =  \displaystyle\frac{1}{\|y\|^{p-2}} \displaystyle\int_D |y(\xi)|^{p-2} \; y(\xi) \; z(\xi) \, d\xi, \enspace y, z \in E
$$
(see e.g. Example \ref{ex:Vnorma}\emph{(ii)}). We prove that $V_r$ satisfies condition \emph{(V2)}. Notice that, since $D$ is bounded,
\begin{equation*}
\int_D\vert \eta(\xi)\vert^{p-1}\, d\xi \le \vert D \vert^{\frac 1p}\Vert \eta\Vert ^{p-1}, \enspace \eta \in L^p(D).
\end{equation*}
Therefore, we have the following estimate
\begin{equation*}
\begin{array}{rl}
&\left \vert \int_{D} |\eta(\xi)|^{p-2}\eta(\xi) \left [ \int_D k(\xi, y)\eta(y)\, dy +g(t,\eta(\xi))\right]\, d\xi\right \vert\\
\\
\le &\int_{D} |\eta(\xi)|^{p-1}\left(  \int_D \vert \eta(y)\vert\, dy +\vert g(t,\eta(\xi))-g(t,0)\vert +\mu\right)\, d\xi\\
\\
\le &\int_{D} |\eta(\xi)|^{p-1}\left( L\vert \eta(\xi)\vert +\vert D\vert^{1-\frac 1p}\Vert \eta\Vert +L+\mu\right)\, d\xi\\
\\
\le &L\int_D\vert \eta(\xi)\vert^p \, d\xi + \left(\vert D\vert^{1-\frac 1p}\Vert \eta \Vert +L+\mu \right)\int_D\vert \eta(\xi)\vert^{p-1}\, d\xi\\
\\
\le &L\Vert \eta\Vert^{p}+\left(\vert D\vert^{1-\frac 1p}\Vert \eta \Vert +L+\mu \right)\vert D\vert^{\frac 1p}\Vert \eta\Vert ^{p-1}\\
\\
=& \left(\vert D\vert +L \right)\Vert \eta\Vert^{p}+\left( L+\mu\right)\vert D\vert^{\frac 1p}\Vert \eta\Vert^{p-1}.
\end{array}
\end{equation*}
Thus, if $\Vert \eta \Vert =r$, by means of conditions \emph{(a)}-\emph{(b)} and the H\"{o}lder inequality, we have that
\begin{equation*}
\begin{array}{rl}
\langle \dot V_r(\eta),  f(t,\eta)\rangle =& \displaystyle \frac{1}{\|\eta\|^{p-2}}\int_{D} |\eta(\xi)|^{p-2}\eta(\xi) \left [ \int_D k(\xi, y)\eta(y)\, dy -b\eta(\xi)+g(t,\eta(\xi))\right]\, d\xi\\
\\
=&-b \Vert \eta \Vert^2+\frac{1}{\|\eta\|^{p-2}} \int_{D} |\eta(\xi)|^{p-2}\eta(\xi) \left [ \int_D k(\xi, y)\eta(y)\, dy +g(t,\eta(\xi))\right]\, d\xi\\
\\
\le&-b \Vert \eta \Vert^2+\frac{1}{\|\eta\|^{p-2}} \left \vert \int_{D} |\eta(\xi)|^{p-2}\eta(\xi) \left [ \int_D k(\xi, y)\eta(y)\, dy +g(t,\eta(\xi))\right]\, d\xi\right \vert\\
\\
\le &\left(-b+\vert D\vert +L \right)\Vert \eta\Vert^{2}+\left( L+\mu\right)\vert D\vert^{\frac 1p}\Vert \eta\Vert \\
\\

= & (-b+\vert D\vert +L)r^2+ \left( L+\mu\right)\vert D\vert^{\frac 1p}\, r.
\end{array}
\end{equation*}
Since $b>L+\vert D\vert$, when
\begin{equation*}
r>\frac{(L+\mu) \vert D \vert^{1/p}}{b-\vert D\vert -L}
\end{equation*}
we obtain that
\begin{equation*}
\langle \dot V_r(\eta), \, f(t, \eta)\rangle <0, \enspace \Vert \eta \Vert =r
\end{equation*}
and then $V_r$ is a locally Lipschitzian bounding function for \eqref{e:equation} (see Remark \ref{r:bf}).

\smallskip
\noindent All the assumptions of Theorem \ref{t:main} are then satisfied and the proof is complete
\end{proof}

\bigskip

\noindent \emph{Proof of Theorem \ref{t:nonPEP} }\ The proof follows from Theorem \ref{t:PEgen}. It remains only to show that \eqref{e:mean} and \eqref{e:Cauchy} in the abstract setting  satisfy \emph{($m_0$)} and \emph{($m_1$)} in $E=L^p(D)$.
\begin{itemize}
\item[\emph{(i)}] \ Let $M \colon C([0,T], L^p(D)) \to L^p(D)$ be defined by
\begin{equation*}
Mx=\frac 1T \int_0^T x(t)\, dt.
\end{equation*}
Notice that the definition is well-posed since $x(t)$ is a continuous function. Let $r>0$ and consider  $x \in C([0,T], L^p(D))$ with  $\Vert x\Vert \le r $. Then
\begin{equation*}
\Vert Mx \Vert \le \frac 1T \int_0^T \Vert x(t) \Vert \, dt \le r;
\end{equation*}
hence condition \emph{($m_0$)} is satisfied for any $r>0$. With no loss of generality, we can then assume that also condition \emph{(b)} in Theorem \ref{t:PEgen} is satisfied.  Let $\{x_n\}\subset C([0,T], L^p(D)) $ be such that $x_n(t) \to x(t), \, t \in (0,T]$ with $x \in C([0,T], L^p(D))$ and $\Vert x_n\Vert\le r$ for all $n$. The  convergence of $\{x_n\}$ is then dominated, implying that
\begin{equation*}
Mx_n=\frac 1T \int_0^T x_n(t)\, dt \to \frac 1T \int_0^T x(t)\, dt=Mx;
\end{equation*}
hence also \emph{($m_1$)} is satisfied. By applying Theorem \ref{t:PEgen}, we state claim \emph{(i)}.
\item[\emph{(ii)}] Let $M \colon C([0,T], L^p(D)) \to L^p(D)$ be such that
\begin{equation*}
Mx=\sum_{i=1}^q \alpha_i x(t_i), \enspace \text{with } t_i\in (0,T], \, \alpha_i\in \mathbb{R}, i=1,...,q \text{ and } \sum_{i=1}^q \vert \alpha_i\vert \le 1.
\end{equation*}
If $x \in C([0,T], L^p(D))$ with $\Vert x\Vert \le r, \, r>0$ we have
\begin{equation*}
\Vert Mx\Vert \le \sum_{i=1}^q \vert \alpha_i\vert \Vert x(t_i)\Vert \le r  \sum_{i=1}^q \vert \alpha_i\vert\le r,
\end{equation*}
implying condition \emph{($m_0$)}. As in \emph{(i)}, by the arbitrariness of $r$ we can assume that condition \emph{(b)} in Theorem \ref{t:PEgen} is satisfied. If, moreover,  $\{x_n\}\subset C([0,T], L^p(D)) $ and $x \in C([0,T], L^p(D)) $ are defined as in \emph{(i)}, it is easy to see that
\begin{equation*}
Mx_n=\sum_{i=1}^q \alpha_i x_n(t_i) \to \sum_{i=1}^q \alpha_i x(t_i) =Mx, \text{ as } n \to \infty,
\end{equation*}
so also \emph{($m_1$)} is satisfied. Claim \emph{(ii)} then follows, again by Theorem \ref{t:PEgen}, and the proof is complete.
\end{itemize}

\section{Bounding functions for mild solutions}\label{s:bounding}
This part contains a brief discussion about the notion of bounding function introduced in Section \ref{s:abstrat} (see Definition \ref{d:bf}).
Notice that the set $ \overline K $ in Definition \ref{d:bf} is  the $0$-sublevel set of $ V. $ The function $ V $ takes its name from its relevant property. In fact, when such a $ V $ exists,  every solution $ x \in C([a,b], E) $ of \eqref{e:equation} which is located in $ \overline K $ lies, indeed, in $ K $  for $ t \in (a,b] $ (see Theorem \ref{t:bf}). Hence, the existence of a bounding function for \eqref{e:equation} makes the transversality condition automatically satisfied on $ (a,b] $ and it remains to check the behaviour of the solution only for  $ t = a. $ \\
The bounding function theory was originally introduced in \cite{GaMa1} and \cite{MT} (see also \cite{GaMa2}), in the framework of finite dimensional systems and with smooth  bounding function. Again in Euclidean spaces, the theory was extended in \cite{T} and in \cite{Za}, to the case of non-smooth functions. The bounding function theory was developed in \cite{AMT}, in an infinite dimensional setting, when $A\colon E \to E$ is linear and bounded and then \eqref{e:equation} has classical, i.e. absolutely continuous, solutions. A special type of bounding function (see \eqref{e:Vr} below) was used in \cite{BLT}, in combination with the Yoshida approximation of the linear part.

\smallskip
To the best of our knowledge, no result about the existence of bounding functions is available in the present general framework, i.e. in an arbitrary Banach space when the linear term is not necessarily bounded and it generates a $C_0$-semigroup.

\smallskip

\begin{remark}\label{r:bf}Let us denote with $\langle \cdot, \, \cdot \rangle$ the duality between $E$ and its dual space $E^*$. When $V$ is G\^{a}teaux differentiable  it is easy to see that
\begin{equation*}
\lim_{h \to 0}\frac{V(x+hy)-V(x)}{h}=\langle \dot V(x), \, y \rangle, \quad x, y \in E.
\end{equation*}
Therefore, since $V(x)=0, \, x \in \partial K, $ if  $V$ is G\^{a}teaux differentiable on $\partial K, $  condition \emph{(V2)} simply reduces to the inequality
\begin{equation}\label{e:CH}
\langle \dot V(x), \, f(t,x) \rangle<0, \enspace \text{for } t \in (a,b] \text{ and } x \in \partial K.
\end{equation}
\end{remark}
\begin{theorem} \label{t:bf} \ Let $ E $ be a Banach space, $ A: D(A) \subset E \rightarrow E $
linear, not necessarily bounded and such that it generates a $C_0$-semigroup $S: [0,\infty) \to \mathcal{L}(E)$ and  $f\colon [a,b]\times E \to E$ continuous. Assume the existence of a bounding function $V \colon E \to \mathbb{R}$ of \eqref{e:equation} (Definition \ref{d:bf}) which is locally Lipschitzian with $K \subset E$ and let there is $\delta \in (0,\infty)$ such that
\begin{equation}\label{e:S(t)K}
S(\tau)\overline K\subseteq \overline K \quad \text{for  } \tau\in [0,\delta].
\end{equation}
If $x \colon [a,b] \to E$ is a mild solution of \eqref{e:equation} with $x(t) \in \overline K$ for  $t \in [a,b]$, then $x(t) \in K$ for $t \in (a,b]$.
\end{theorem}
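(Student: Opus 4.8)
The plan is to argue by contradiction. Suppose that $x(t_0)\in\partial K$ for some $t_0\in(a,b]$; I will show that this forces
\begin{equation*}
\liminf_{h\to 0^-}\frac{V\bigl(x(t_0)+h\,f(t_0,x(t_0))\bigr)}{h}\ge 0,
\end{equation*}
contradicting property \emph{(V2)} at the point $(t_0,x(t_0))\in(a,b]\times\partial K$. The key point is that a mild solution need not be differentiable, so one cannot simply say $x'(t_0)=f(t_0,x(t_0))$; instead I will use the ``restarted'' mild-solution identity together with the invariance \eqref{e:S(t)K} to express $x(t_0)$ as a small perturbation of a point of $\overline K$.

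First I would record that, from Definition \ref{d:mild} and the semigroup law, every mild solution satisfies
\begin{equation*}
x(t)=S(t-s)\,x(s)+\int_s^t S(t-\tau)\,f(\tau,x(\tau))\,d\tau,\qquad a\le s\le t\le b.
\end{equation*}
Fix $t_0$ as above and, for $h<0$ with $|h|<\min\{\delta,\,t_0-a\}$, put $s:=t_0+h$,
\begin{equation*}
w_h:=S(-h)\,x(t_0+h),\qquad I(h):=\int_{t_0+h}^{t_0} S(t_0-\tau)\,f(\tau,x(\tau))\,d\tau,
\end{equation*}
so that $x(t_0)=w_h+I(h)$. Since $x(t_0+h)\in\overline K$ and $-h\in[0,\delta]$, the invariance \eqref{e:S(t)K} gives $w_h\in\overline K$, hence $V(w_h)\le 0$ by \emph{(V1)}; moreover $V(x(t_0))=0$ by \emph{(V1)}, and $w_h\to x(t_0)$ as $h\to 0^-$ by strong continuity of $S$ and continuity of $x$.

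Next I would estimate the integral term. Since $f$ and $x$ are continuous and $S$ is strongly continuous with $\|S(\tau)\|$ bounded for $\tau\in[0,b-a]$, the quantity
\begin{equation*}
\omega(h):=\sup_{\tau\in[t_0+h,\,t_0]}\bigl\|\,S(t_0-\tau)\,f(\tau,x(\tau))-f(t_0,x(t_0))\,\bigr\|
\end{equation*}
tends to $0$ as $h\to 0^-$, and $\|I(h)-|h|\,f(t_0,x(t_0))\|\le|h|\,\omega(h)$. Because $h=-|h|$, this says precisely that $w_h=x(t_0)+h\,f(t_0,x(t_0))+\rho(h)$ with $\|\rho(h)\|\le|h|\,\omega(h)$, so $\|\rho(h)\|/|h|\to 0$. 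Now let $\ell$ be a Lipschitz constant for $V$ on a ball about $x(t_0)$ which, for all small $|h|$, contains both $w_h$ and $x(t_0)+h\,f(t_0,x(t_0))$; then
\begin{equation*}
V\bigl(x(t_0)+h\,f(t_0,x(t_0))\bigr)\le V(w_h)+\ell\,\|\rho(h)\|\le \ell\,|h|\,\omega(h).
\end{equation*}
Dividing by $h<0$ reverses the inequality and gives $\dfrac{V(x(t_0)+h\,f(t_0,x(t_0)))}{h}\ge-\ell\,\omega(h)$ for all small $h<0$; passing to the $\liminf$ yields the announced inequality, hence the contradiction. Consequently $x(t)\notin\partial K$ for every $t\in(a,b]$, and since $K$ is open one has $\overline K\setminus K=\partial K$, so $x(t)\in K$ for all $t\in(a,b]$.

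The step I expect to be the main obstacle — and the one that genuinely uses \eqref{e:S(t)K} — is the identification $w_h=x(t_0)+h\,f(t_0,x(t_0))+\rho(h)$ with $\rho(h)=o(|h|)$: it replaces the (unavailable) differentiability of the mild solution by the invariance of $\overline K$ under the semigroup together with the ``integral mean value'' behaviour of $I(h)$ near $t_0$, and it is here that all hypotheses are combined. Everything else is the sign bookkeeping in dividing by the negative increment $h$ and an unwinding of the definition of $\liminf$.
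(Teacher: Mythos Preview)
Your proof is correct and follows essentially the same route as the paper's: both argue by contradiction at a point $t_0\in(a,b]$ with $x(t_0)\in\partial K$, use the restarted mild-solution identity to write $S(-h)x(t_0+h)=x(t_0)+h\,f(t_0,x(t_0))+o(|h|)$, invoke \eqref{e:S(t)K} to place $S(-h)x(t_0+h)$ in $\overline K$, and then combine \emph{(V1)} with the local Lipschitz property of $V$ to contradict \emph{(V2)}. The only cosmetic difference is that the paper first extracts a sequence $k_n\to 0^-$ realising the negative $\liminf$ in \emph{(V2)} and works along it, whereas you bound the difference quotient for all small $h<0$ and pass to the $\liminf$ directly; the content is the same.
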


 \begin{proof} \ Let $x:[a,b] \to E$ be a solution of \eqref{e:equation} satisfying $x(t) \in \overline K $ for all $ t \in [a,b] $. Assume, by contradiction, the existence of $ \hat t \in (a,b] $ such that $\hat x:=x(\hat t) \in \partial K$.\\
By condition \emph{(V2)} there is a sequence $\{k_n\}\subset (-\infty, 0)$  with $\hat t +k_n>a$ for all $n$ such that $k_n \to 0^-$ as $n \to \infty$ and
\begin{equation}\label{e:V2s}
\lim_{n \to \infty}\frac{V(\hat x+k_nf(\hat t,\hat x))}{k_n}<0.
\end{equation}
According to the definition of mild solution (see Definition \ref{d:mild}) and the properties of the semigroup we have that
$$
\begin{array}{rl}
\hat x=x(\hat t)=&S(\hat t-a )x(a)+\int_a^{\hat t}S(\hat t-s)f(s, x(s))\, ds\\
\\
=&S(-k_n)\left[S(\hat t-a+k_n)x(a)+\int_a^{\hat t +k_n}S(\hat t-s+k_n)f(s, x(s)) \, ds\right]\\
&+ \int_{\hat t +k_n}^{\hat t}S(\hat t-s)f(s, x(s)) \, ds\\
\\
=&S(-k_n)x(\hat t+k_n) + \int_{\hat t +k_n}^{\hat t}S(\hat t-s)f(s, x(s))\, ds.
\end{array}
$$
Hence
\begin{equation}\label{e:sxa}
\hat x-S(-k_n)x(\hat t+k_n)=\int_{\hat t +k_n}^{\hat t}S(\hat t-s)f(s, x(s))\, ds, \enspace n \in \mathbb{N}.
\end{equation}
Since
\begin{equation*}
\int_{\hat t +k_n}^{\hat t}S(\hat t-s)f(s, x(s))\, ds=\int_{\hat t +k_n}^{\hat t}S(\hat t-s)f(\hat t, \hat x)\, ds+\int_{\hat t +k_n}^{\hat t}S(\hat t-s)\left[f(s, x(s))-f(\hat t, \hat x) \right]\, ds,
\end{equation*}
by means of the change of variables $\tau=\hat t -s$ introduced in the first integral on the right hand side, we have
\begin{equation}\label{e:sxb}
\begin{array}{rl}
\displaystyle\int_{\hat t +k_n}^{\hat t}S(\hat t-s)f(s, x(s))\, ds=& \int_0^{-k_n} S(\tau)f(\hat t, \hat x)\, d\tau \\
\displaystyle+&\int_{\hat t +k_n}^{\hat t} S(\hat t-s)\left[f(s, x(s))-f(\hat t, \hat x) \right]\, ds.
\end{array}
\end{equation}
Since both $f$ and $x$ are continuous functions in their respective domains, for every $\varepsilon >0$ there is $\sigma=\sigma(\varepsilon)>0$ such that
\begin{equation}\label{e:f}
\Vert f(t, x(t))-f(\hat t, \hat x)\Vert \le \varepsilon, \quad \mbox{for }\vert t-\hat t\vert \le \sigma.
\end{equation}
Hence there is $\overline n=\overline n(\sigma)$ such that \eqref{e:f} is satisfied for $t\in [\hat t+k_n, \hat t]$ and $n\ge \overline n$. Consequently, since $ \Vert S(t) \Vert \le 1 $ for $ t\ge0 $ by \emph{(A)}, for $n \ge \overline n$ we have
\begin{equation*}
\begin{array}{rl}
\frac{1}{-k_n}\left \Vert\int_{\hat t +k_n}^{\hat t}S(\hat t-s)\left[f(s, x(s))-f(\hat t, \hat x) \right]\, ds\right \Vert \le & \frac{1}{-k_n} \int_{\hat t +k_n}^{\hat t}\left \Vert S(\hat t-s)\right \Vert \left \Vert f(s, x(s))-f(\hat t, \hat x) \right \Vert \, ds\\
\\
\le &\frac{1}{-k_n}\varepsilon(-k_n)=\varepsilon.
\end{array}
\end{equation*}
It shows that
\begin{equation*}
\frac{1}{-k_n}\int_{\hat t +k_n}^{\hat t}S(\hat t-s)\left[f(s, x(s))-f(\hat t, \hat x) \right]\, ds \to 0, \quad \text{as } n \to \infty.
\end{equation*}
Notice (see e.g. \eqref{e:media}) that
\begin{equation*}
\frac{1}{-k_n}\int_0^{-k_n}S(\tau)f(\hat t, \hat x)\, d\tau \to f(\hat t, \hat x), \quad \text{as } n \to \infty.
\end{equation*}
Therefore, from conditions \eqref{e:sxa} and \eqref{e:sxb}, we obtain
\begin{equation*}
\frac{S(-k_n)x(\hat t +k_n)-\hat x}{k_n} \to f(\hat t, \hat x).
\end{equation*}
We have then showed the existence of $\{\sigma_n\}\subset E$, depending on $\{k_n\}$, such that $\sigma_n \to 0$ as $n \to \infty$ and satisfying

\begin{equation}\label{e:sxc}
S(-k_n)x(\hat t +k_n)=\hat x+k_nf(\hat t, \hat x)+k_n\sigma_n.
\end{equation}

\sm
Let $U \subset E$ be open with $\hat x \in U$ and $L>0$ be such that $V|_U$ is $L$-Lipschitzian. Take $n$ large enough in such a way that both  $\hat x+k_nf(\hat t, \hat x)$ and $ \hat x+k_nf(\hat t, \hat x)+k_n\sigma_n$ belong to $U$ and  $S(-k_n)x(\hat t +k_n) \in \overline K$ by \eqref{e:S(t)K}.  According to \emph{(V1)} and \eqref{e:sxc} we obtain that
$$
0\le \frac{V(S(-k_n)x(\hat t +k_n))}{k_n}=\frac{V(\hat x+k_nf(\hat t, \hat x)+k_n\sigma_n)}{k_n}= \frac{V(\hat x+k_nf(\hat t, \hat x))}{k_n}+\Delta_n,
$$
with
$$
\Delta_n:= \frac{V(\hat x+k_nf(\hat t, \hat x)+k_n\sigma_n)-V(\hat x+k_nf(\hat t, \hat x))}{k_n}.
$$
By the L-Lipschitzianity in $U$ of $ V, $ we obtain that
\begin{equation*}
\vert \Delta_n \vert \le L\Vert \sigma_n \Vert \to 0, \enspace \text{as }n \to \infty.
\end{equation*}
 Consequently,
\begin{eqnarray}\label{eq:contr1}
\lim_{n \to \infty} \frac{V(\hat x +k_nf(\hat t, \hat x))}{k_n}&=&\lim_{n \to \infty} \biggl[ \frac{V(\hat x+k_nf(\hat t, \hat x))}{k_n}+\Delta_n \biggr]\nonumber\\
&=& \lim_{n \to \infty} \frac{V(S(-k_n)x(\hat t +k_n))}{k_n} \ge 0
\end{eqnarray}
in contradiction with \eqref{e:V2s}. Hence $\hat x \in K$ and the proof is complete.
\end{proof}

\smallskip

 The case when the $0$-sublevel set of the bounding function is the ball centered in $0$ and with radius $ r,$ i.e.
\begin{equation}\label{e:Vr}
  V_r(x)=\frac 12 \left( \Vert x \Vert^2 -r^2\right), \enspace x\in E, \, r>0,
\end{equation}
frequently occurs in several applications (see e.g. Section \ref{s:application}).

\begin{Ex}\label{ex:Vnorma} \ \emph{(i)} Let $E$ be a Hilbert space with scalar product $\langle \cdot, \, \cdot \rangle$. In this case $V_r \in C^1(E)$ and $\dot V_r (x) \colon E \to E $ is such that $\dot V_r(x)(y)= \langle x, \, y \rangle, \, \,   x, y \in  E, \, r>0. $ Moreover  $\Vert \dot V_r(x) \Vert =\Vert x \Vert, \, x \in E, $ and then $ V_r $ is also locally Lipschitzian. Therefore, when the function $ f $ satisfies
\begin{equation}\label{e:V2H}
\langle x, \, f(t,x) \rangle <0, \enspace \text{for } t\in (a,b] \text{ and } \, \Vert x\Vert =r,
\end{equation}
then, by \eqref{e:CH}, $V_r$ is a locally Lipschitzian bounding function for \eqref{e:equation} for all $ r>0. $

\emph{(ii) } Assume, now, that $ E^* $ is a uniformly convex Banach space. Hence the function $ V_r $ is Frech\'{e}t differentiable on $ E $ with
\begin{equation*}
\langle \dot V_r(x), y \rangle=\langle J(x), y\rangle, \enspace \text{ for } x, y \in E,
\end{equation*}
where $ J \colon E \to E^* $ is the single-valued duality map given by
$$
J(x)=x^* \in E^* \; : \; \|x^*\|=\|x\| \; \mbox{and} \; \bigl\langle x^*, x \bigr\rangle =\|x\|^2
$$
and $ J $  is continuous (see e.g. \cite{D}). Hence, if we further assume the existence of $ r>0 $ such that
\begin{equation}\label{e:J}
\langle J(x), \, f(t,x) \rangle <0, \enspace \text{for } t\in (a,b] \text{ and } \, \Vert x\Vert =r,
\end{equation}
then $V_r$  is a locally Lipschitzian bounding function for equation \eqref{e:equation}. \\
In particular, let  $ E=L^p(\Omega) $ where $\Omega \subset \mathbb{R}^n$ is a bounded measurable subset of $ \mathbb{R}^n, n\ge 1 $  and $ 1<p<\infty. $ Then $ E $ is reflexive,  $ E^*=L^{p\prime}, $ with $ 1/p+1/p^{\prime}=1, $  is uniformly convex and (see  \cite[Example 1.4.4]{vr1} and \cite[Chapter I, Example 2.7]{HP})
\begin{equation*}
\langle J(x), y \rangle = \displaystyle\frac{1}{\|x\|^{p-2}} \displaystyle\int_\Omega |x(\xi)|^{p-2} \; x(\xi) \; y(\xi) \, d\xi, \enspace x, y \in L^p(\Omega).
\end{equation*}
Again by \eqref{e:J}, if the following condition
\begin{equation}\label{e:Jp}
\langle J(x), f(t,x) \rangle = \displaystyle\frac{1}{\|x\|^{p-2}} \displaystyle\int_\Omega |x(\xi)|^{p-2} \; x(\xi) \; f(t, x)(\xi) \, d\xi<0,  \enspace \text{for } t\in (a,b] \text{ and } \, \Vert x\Vert =r,
\end{equation}
is satisfied, then  $V_r$  is a locally Lipschitzian bounding function for \eqref{e:equation}.
\end{Ex}

\begin{Ex}\label{ex:Vd} \ In a uniformly convex Banach space $ E, $ consider the usual distance function $d(\cdot, r\overline B) \colon E \to \mathbb{R}$ from the closed ball centered in $0$ with radius $r>0$. From its definition,
\begin{equation*}
d(x, r\overline B) :=\left\{\begin{array}{ll}
0 &  \Vert x \Vert \le r\\
\displaystyle{\inf_{y\in r\overline B} } \, \Vert x-y\Vert=\Vert x \Vert  -r &  \Vert x \Vert >r
\end{array} \right.
\end{equation*}
 it is easy to show that it is a Lipschitzian function. If we further assume that
\begin{equation}\label{e:dist}
\liminf_{h \to 0^-}\frac{d(x+hf(t,x), r\overline B)}{h}<0, \qquad \text{for } t\in(a, b], \, \Vert x \Vert =r,
\end{equation}
the function $d(\cdot, \overline B)$ is a bounding function for equation \eqref{e:equation}.

\end{Ex}

\section*{Acknowledgments}
The authors are members of the {\em Gruppo Nazionale per l'Analisi Matematica, la Probabilit\`{a} e le loro Applicazioni} (GNAMPA) of the {\em Istituto Nazionale di Alta Matematica} (INdAM) and acknowledge financial support from this institution. The first author has been supported by the project Fondi Ricerca di Base 2016 \emph{Metodi topologici per equazioni differenziali in spazi astratti}, Department of Mathematics and Computer Science, University of Perugia.

\noindent

\end{document}